\documentclass[11pt,journal]{amsart}
\usepackage[top=2.5cm, bottom=2.5cm, left=3cm, right=3cm]{geometry}
\usepackage[T1]{fontenc}
\usepackage{lmodern}
\usepackage{amsmath,amssymb}
\usepackage{amsrefs}
\usepackage{microtype}
\usepackage{color}
\usepackage{needspace}
\usepackage{hyperref}
\hypersetup{colorlinks=true,linkcolor=blue,citecolor=blue,urlcolor=blue}
\newtheorem{theorem}{Theorem}[section]

\newtheorem{lemma}[theorem]{Lemma}
\newtheorem{proposition}[theorem]{Proposition}
\newtheorem{corollary}[theorem]{Corollary}
\theoremstyle{remark}
\newtheorem{remark}[theorem]{Remark}
\DeclareMathOperator{\Tr}{Tr}
\DeclareMathOperator{\Rea}{Re}
\DeclareMathOperator{\supp}{supp}
\newcommand{\R}{\mathbb R}
\newcommand{\D}{\mathcal D}
\newcommand{\Sch}{\mathcal S}
\newcommand{\Ut}{U_\theta}
\newcommand{\taut}{\tau_\theta}

\newcommand{\Ft}{\mathcal F_\theta}
\newcommand{\LpQ}[1]{L_{#1}(\R_\theta^d)}
\newcommand{\norm}[1]{\lVert#1\rVert}
\newcommand{\abs}[1]{\lvert#1\rvert}
\allowdisplaybreaks[2]
\title
{Noncommutative sharp Hausdorff--Young inequality}
\author{Hongsen Qiu}
\address{Institute for Advanced Study in Mathematics, Harbin Institute of Technology, Harbin, 150001, CHINA}
\email{chieughongsen@gmail.com}

\author{Xiao Xiong}
\address{Institute for Advanced Study in Mathematics, Harbin Institute of Technology, Harbin, 150001, CHINA}
\email{xxiong@hit.edu.cn}

\subjclass[2020]{Primary 43A15; Secondary 46L52, 47A30, 81S30}
\keywords{}

\begin{document}
\begin{abstract}
We prove the sharp Hausdorff--Young inequality on the quantum Euclidean space. Our result implies the sharp Hausdorff--Young constants for the Weyl transform, as well as that for Heisenberg groups. The key ingredient is a novel flow related to the mixed-norm of noncommutative Gabor transform. This, meanwhile, implies a new proof of the classical sharp Hausdorff--Young inequality.

We then apply the sharp Hausdorff--Young inequality to establish the sharp Young inequality for noncommutative convolution in the
range $1\le p,q\le2\le r\le\infty$, with
$1/p+1/q=1+1/r$. After appropriate rescaling and
trace normalization, this convolution coincides with beam-splitter
convolution for bosonic systems, yielding the corresponding Young
inequalities with optimal constants in the same range.
\end{abstract}
\maketitle
\tableofcontents
\section{Introduction}
Let $\mathcal F$ denote the usual Fourier transform on $\mathbb R^d$.
In 1975, Beckner \cite{Beckner1975} established the sharp
Hausdorff--Young inequality
\begin{align}\label{classical-sharp}
 \|\mathcal F(f)\|_{L_q(\mathbb R^d)}
 \le C_p^d\,\|f\|_{L_p(\mathbb R^d)},
 \qquad f\in L_p(\mathbb R^d),
\end{align}
where $1\le p\le2$, $1/p+1/q=1$, and
\[
 C_p=\sqrt{\frac{p^{1/p}}{q^{1/q}}}.
\]
The constant $C_p^d$ is optimal, and equality is attained
by suitable Gaussian functions.

For $f\in L_1(\mathbb C^n)$, define its Weyl transform by
\[
 \rho(f)
 =\frac{1}{(2\pi)^n}\int_{\mathbb R^n}\int_{\mathbb R^n}
 f(u+iv)e^{i(u\cdot P+v\cdot Q)}\,du\,dv,
\]
where $P=(P_1,\ldots,P_n)$ and $Q=(Q_1,\ldots,Q_n)$
are the operators defined by
\[
 P_j\varphi(x)=- i
               \frac{\partial}{\partial x_j}\varphi(x),
 \quad
 Q_j\varphi(x)=x_j\varphi(x),
 \qquad j=1,\ldots,n.
\]
In 1978, Klein and Russo
\cite{KleinRusso1978} proved the same constant for Weyl transform at $q=2,4,6,\cdots$, i.e., for even integers $q$, 
\[ \|\rho (f)\|_{\mathcal{S}_q} \leq C_p^{2n} \|f\|_{L_{p}(\mathbb{C}^n,\mu)}.\]
Here $\mathcal S_q$ denotes
the Schatten $q$-class in $\mathcal B(L_2(\mathbb R^n))$ with the norm
$\|T\|_{\mathcal S_q}=(\operatorname{Tr}|T|^q)^{1/q}$ and for $z=u+iv\in\mathbb C^n$ with the measure
$$d\mu(z)=(2\pi)^{-n}\,du\,dv.$$
For $q>2$ in this range, the optimal constant is not attained
by any nonzero function, but is saturated along a suitable
sequence of Gaussian functions.


Klein and Russo's work provides a natural starting point
for the noncommutative problem considered here. Later, a partial result to this problem was given in Lieb \cite{Lieb1990} in the form of ambiguity functions. In fact, Lieb's result is equivalent to 
$$ \|f\|_{L_{q}(\mathbb{C},\mu)} \leq \Big(\frac{2}{q}\Big)^{1/q} \|\rho(f)\|_{\mathcal{S}_p},\quad {\rm rank}\,\rho(f)=1$$
and the conjugate result
$$ \|\rho (f)\|_{\mathcal{S}_q} \leq \Big(\frac{2}{q}\Big)^{1/q} \|f\|_{L_{p}(\mathbb{C},\mu)},\quad {\rm rank}\,\rho(f)=1.$$
The most recent progress was made by Cowling, Martini, M\"uller and Parcet in \cite{CowlingEtAl2019}, where they proved that, for polyradial $f\in C_c^{\infty}(\mathbb{C}^n)$,
$$\norm{\rho(f)}_{\mathcal{S}_q}\le C_p^{2n}\norm{f(\xi )e^{\frac{|\xi|^2}{4}}}_{L_p(\mathbb{C}^n,\mu)}. $$
The unrestricted global sharp results for general $q$  remained
open there.

\medskip

In this paper, we are going to prove
\begin{equation}\label{eq:introHY}
 \norm{\rho(f)}_{\mathcal{S}_q}\le C_p^{2n}\norm{f}_{L_p(\mathbb{C}^n,\mu)}
\end{equation}
with the sharp constant for all $1\leq p\leq 2$. Equivalently, we work in the framework of quantum Euclidean
spaces. Let $\theta$ be a $d$-dimensional skew-symmetric real matrix.
For $\xi\in\mathbb R^d$, define the unitary operator
\[
 (U_\theta(\xi)g)(x)
 =e^{\frac{i}{2}\langle\xi,\theta x\rangle}g(x-\xi),
 \qquad g\in L_2(\mathbb R^d).
\]
These operators satisfy the Weyl relations
\begin{align}\label{weyl-relation}
 U_\theta(\xi)U_\theta(\eta)
 =e^{\frac{i}{2}\langle\xi,\theta\eta\rangle}U_\theta(\xi+\eta),
 \qquad
 U_\theta(\xi)^*=U_\theta(-\xi).
\end{align}
Write $L_\infty(\mathbb R_\theta^d)$ for the von Neumann algebra generated by $\{U_\theta(\xi)\}_{\xi\in\mathbb R^d}$, and set
\[
 U_\theta(f)
 =\int_{\mathbb R^d}f(\xi)U_\theta(\xi)\,d\xi,
 \qquad f\in\mathcal S(\mathbb R^d).
\]
This algebra admits a canonical faithful normal  semifinite trace $\tau_\theta$
characterized by $\tau_\theta(U_\theta(f))=f(0)$.
The associated noncommutative $L_p$-spaces $L_p(\mathbb R_\theta^d)$ are equipped with the norms
\[
 \|x\|_{L_p(\mathbb R_\theta^d)}
 =\tau_\theta(|x|^p)^{1/p},
 \qquad 1\le p<\infty,
\]
and with the operator norm for $p=\infty$.
Our main result is the sharp Hausdorff--Young inequality
\begin{align}\label{main}
 \|U_\theta(f)\|_{L_q(\mathbb R_\theta^d)}
 \le C_p^d\,\|f\|_{L_p(\mathbb R^d)},
 \qquad 1\le p\le2,\quad \frac1p+\frac1q=1.
\end{align}
The optimal constant is thus independent of $\theta$.
To recover \eqref{eq:introHY}, observe that the Weyl unitaries
$D(u,v)=e^{i(u\cdot P+v\cdot Q)}$ satisfy
\[
 D(u,v)D(u',v')
 =e^{\frac{i}{2}(u\cdot v'-v\cdot u')}
  D(u+u',v+v'),
 \qquad
 D(u,v)^*=D(-u,-v).
\]
This coincides with the quantum Euclidean space when $d=2n$ and
\[
 \theta=
 \begin{pmatrix}
  \mathbf{0}&I_n\\
  -I_n& \mathbf{0}
 \end{pmatrix}.
\]

The main ingredient for the proof of \eqref{eq:introHY} is the following. For \(A> 0\), put
\begin{equation}\label{eq:localization}
 \mathcal L_A(f)=\left(\frac{pA}{\pi}\right)^{\frac{d}{2}}\int_{\R^d}
  \norm{\Ut\bigl(f(\cdot) e^{-A\abs{\,\cdot-s\,}^2}\bigr)}_{L_q(\R_\theta^d)}^{p}
 \,ds.
\end{equation}
We will show that \(\mathcal L_A(f)\) is nondecreasing in \(A\), and that the endpoint limits
\[
 \lim_{A\to 0^+}\mathcal L_A(f)
 =\|U_\theta(f)\|_{L_q(\mathbb R_\theta^d)}^p,
 \qquad
 \lim_{A\to\infty}\mathcal L_A(f)
 =C_p^{dp}\|f\|_{L_p(\mathbb R^d)}^p.
\]
This gives \eqref{eq:introHY} directly. When $\theta=0$, the map $U_\theta$ identifies with the
classical Fourier transform, and $\Ut\bigl(f(\cdot) e^{-A\abs{\,\cdot-s\,}^2}\bigr)$ is indeed the Gabor transform. Hence, for general $\theta$, we can view $\Ut\bigl(f(\cdot) e^{-A\abs{\,\cdot-s\,}^2}\bigr)$ as the noncommutative Gabor transform.

\medskip

Beckner's proof \cite{Beckner1975}  for \eqref{classical-sharp} is based on the complex
hypercontractivity of the Ornstein--Uhlenbeck semigroup. It is therefore natural to seek an analogous argument using
the quantum Ornstein--Uhlenbeck semigroup. However, Beigi and Rahimi-Keshari's work \cite{beigimeta} reveals that the hypercontractivity is different from the classical case---it contains a temperature constant and can only recover the classical result when the temperature tends to $\infty$. Due to the temperature constant, we do not see how to carry out Beckner's
argument in the present setting to prove the noncommutative sharp Hausdorff-Young inequality \eqref{eq:introHY}.

Our approach is motivated by flow proofs of sharp functional
inequalities, including the semigroup proof of the Gaussian isoperimetric
inequality by Bakry and Ledoux \cite{BakryLedoux1996} and the heat-flow proof of the sharp
Young's inequality by Bennett and Bez \cite{BennettBez2009}.
The underlying principle is to construct a monotone quantity whose
endpoint values give the two sides of the desired inequality.
For the Hausdorff--Young inequality, heat flow is a natural
candidate. However, Bennett, Bez and Carbery \cite{Bennet2008}
showed that the natural heat-flow monotonicity fails even in the
classical setting when $q>2$ is not an even integer.
Ivanisvili and Volberg \cite{Paata} observed that the sharp
Hausdorff--Young inequality follows from the monotonicity
result in \cite{Janson}.
They referred to the corresponding flow as the
Beckner--Janson flow. When $\theta=0$, our flow \eqref{eq:localization} is more closely aligned with the Beckner--Janson flow \cite[(15)]{Paata}.

\medskip

According to \cite{CowlingEtAl2019}, our result implies the sharp Hausdorff-Young inequality on Heisenberg groups. 
We identify the Heisenberg group $\mathbb H_n$ with $\mathbb C^n\times\mathbb R$, equipped
with the group law
\[
 (z,t)(z',t')
 =
 \left(z+z',t+t'+\frac12\operatorname{Im}(\overline z\cdot z')\right),\quad z\in \mathbb{C}^n,\,t\in \R
\]
and Haar measure $dz\,dt$.
For $\lambda\in\mathbb R\setminus\{0\}$, the Schr\"odinger
representation $\pi_\lambda$ acts on $L_2(\mathbb R^n)$ by
\[
 \bigl(\pi_\lambda(u+iv,t)\varphi\bigr)(x)
 =
 e^{2\pi i\lambda t
    +2\pi i\,\operatorname{sgn}(\lambda)\sqrt{|\lambda|}\,v\cdot x
    +\pi i\lambda u\cdot v}
 \varphi\bigl(x+\sqrt{|\lambda|}\,u\bigr).
\]
For $F\in L_1(\mathbb H_n)$, define
\[
 \widehat F(\lambda)
 =
 \pi_\lambda(F)
 =
 \int_{\mathbb C^n}\int_{\mathbb R}
 F(z,t)\pi_\lambda(z,t)\,dt\,dz.
\]
Thus $\widehat F(\lambda)$ is a bounded operator on
$L_2(\mathbb R^n)$. The Plancherel formula reads
\[
 \|F\|_{L_2(\mathbb H_n)}^2
 =
 \int_{\mathbb R\setminus\{0\}}
 \|\widehat F(\lambda)\|_{\mathcal S_2}^2
 |\lambda|^n\,d\lambda.
\]
Accordingly, for $1\le p<\infty$, put
\[
 \|\widehat F\|_p
 =
 \left(
 \int_{\mathbb R\setminus\{0\}}
 \|\widehat F(\lambda)\|_{\mathcal S_p}^p
 |\lambda|^n\,d\lambda
 \right)^{1/p},
\]
with the essential supremum of the operator norms when $p=\infty$.
For $1\le p\le2$ and $q=p'$, let $H_p(\mathbb H_n)$ denote
the optimal constant in
\[
 \|\widehat F\|_q
 \le H_p(\mathbb H_n)\|F\|_{L_p(\mathbb H_n)}.
\]
Let \(W_p(\mathbb C^n)\) denote the
optimal \(L_p(\mathbb C^n,\mu)\to \mathcal{S}_{q}\)
constant for the Weyl transform. By
\cite[Eq.(1.4)]{CowlingEtAl2019}, we have the relation
\[
 H_p(\mathbb H_n)=C_pW_p(\mathbb C^n),
 \qquad 1\leq p \leq 2.
\]
Thus, our result implies \(W_p(\mathbb C^n)=C_p^{2n}\), which directly gives
\(H_p(\mathbb H_n)=C_p^{2n+1}\).

\medskip

The paper is organized as follows.
Section~\ref{sec-pre} recalls the framework of quantum Euclidean space and
the Gaussian norm formulas used in the proof.
Section~\ref{complex-dom} establishes complex dominance of the second-order Fr\'echet derivative underlying the monotonicity argument.
In Section~\ref{sec-main}, we prove the monotonicity and endpoint limits
of \eqref{eq:localization}, and deduce the sharp
Hausdorff--Young inequality.
Section~\ref{sec-young} treats noncommutative convolution and derives
sharp Young inequalities in a restricted range of exponents,
together with an equivalent result of the  beam-splitter channel.
Appendix~\ref{app} records the orthonormal basis of quantum Euclidean spaces. 

Throughout the paper, unless otherwise stated, let \(1\leq p \leq 2\) and write
\[
 q=\frac{p}{p-1},\qquad C_p=\sqrt{p^{1/p}q^{-1/q}}
\]
with the convention $q=\infty$ when $p=1$.

\section{Quantum Euclidean space}\label{sec-pre}
\subsection{Calculus on quantum Euclidean space}
The quantum Euclidean space framework used below follows that of McDonald,
Sukochev and Xiong \cite{MSX2020}, see also \cite[Section 6]{Fedor2020}. The relation with the Weyl transform and the Hausdorff--Young problem
is discussed in \cite{KleinRusso1978,CowlingEtAl2019}; standard facts
about the Weyl representation can be found in \cite{Folland1989}. We briefly review the definition and basic properties here.

Let $\theta$ be a real skew-symmetric $d\times d$ matrix.
Define the following family of unitary operators on $L_2(\mathbb R^d)$
\[
 (\Ut(\xi)u)(x)
 =e^{\frac i2\langle\xi,\theta x \rangle}u(x-\xi),\qquad u\in L_2(\mathbb R^d).
\]
It is easily verified that the family $\{\Ut(\xi)\}_{\xi \in\R^d}$ satisfies
\begin{equation}\label{eq:weyl}
 \Ut(\xi)\Ut(\eta)
 =e^{\frac i2 \langle\xi,\theta\eta \rangle}
 \Ut(\xi+\eta),\qquad
 \Ut(\xi)^*=\Ut(-\xi).
\end{equation}
The von Neumann subalgebra of $\mathcal B(L_2(\mathbb R^d))$ generated by
$\{\Ut(\xi)\}_{\xi\in\mathbb R^d}$ is called a
quantum Euclidean space, denoted by
$\R^d_\theta=L_\infty(\R^d_\theta)$.

With $\theta$ given as above, the algebra $L_\infty(\R^d_\theta)$ is
$*$-isomorphic to the algebra 
\begin{align}\label{iso}
    L_\infty(\mathbb R^d_\theta)\cong L_\infty(\mathbb R^{{\rm dim }({\rm ker}(\theta))})\overline{\otimes}\mathcal B(L_2(\mathbb R^{\frac{{\rm rank}(\theta)}{2}})).
\end{align}
If ${\rm dim }({\rm ker}(\theta))=0$, we call the quantum Euclidean space  $L_\infty(\R^d_\theta)$ is non-degenerate.  In non-degenerate cases, $d$ must be even.

Let $f\in L_1(\mathbb R^d)$. We define
$\Ut(f)\in L_\infty(\mathbb R^d_\theta)$ as the operator given by the
integral
\begin{equation}\label{eq:synthesis}
 \Ut (f)=\int_{\R^d} f(\xi)\Ut(\xi)\,d\xi.
\end{equation}
This integral is understood in the strong operator sense.
\begin{remark}The map $\Ut$ has other names and notations
in the literature. It is called the Weyl quantization map in
\cite{qtmath}. In \cite{qmath}, $\Ut$ is also essentially the same as the so-called Weyl transform there. See also \cite{ParcetMAMS}.
\end{remark}

There is a canonical semifinite normal trace $\tau_\theta$ on
$L_\infty(\mathbb R^d_\theta)$. Define
$$ \mathcal S(\mathbb R^d_\theta)=\{\Ut(f)\;
:\; f\in\mathcal S(\mathbb R^d) \}.$$ 
For $x=\Ut (f)\in\mathcal S(\mathbb R^d_\theta)$, we define
$\tau_\theta(x)$ as
\[
    \tau_\theta(x)=f(0).
\]
After matching the Weyl realizations with
the cocycle \eqref{eq:weyl}, our trace is \((2\pi)^{-d}\) times the
trace in \cite[Definition 2.6 and Lemma 2.7]{MSX2020}. 


The functional
$\tau_\theta:\mathcal S(\mathbb R^d_\theta)\to\mathbb C$ admits an extension
to a semifinite normal trace on $L_\infty(\mathbb R^d_\theta)$. For \(1\le p<\infty\), the noncommutative $L_p$-norm is
\[
 \norm{x}_{L_p(\mathbb R^d_\theta)}=\taut(\abs x^p)^{1/p}.
\]
When \(p=\infty\) it is the operator norm. We abbreviate this norm to \(\norm{x}_p\), and abuse the notation
\(\norm f_p\) for the usual $L_p$-norm when $f$ is a function.

For \(f\in\Sch(\R^d)\), we compute
$$ \Ut(f)^*=\int_{\R^d}\overline{f(\xi)}\Ut(\xi)^*\,d\xi=\int_{\R^d}\overline{f(-\xi)}\Ut(\xi)\,d\xi.$$
Hence we denote $f^{*}(\xi)=\overline{f(-\xi)}$. Define the twisted convolution by
$$ f*_\theta g(\xi)=\int_{\R^d}f(\xi-\eta)g(\eta) e^{\frac{i}{2}\langle \xi,\theta\eta\rangle}\,d \eta.$$
Then we have 
$$\Ut(f)\Ut(g)=\Ut(f*_\theta g).$$
It follows that
\begin{equation}\label{eq:plancherelpair}
 \taut\bigl(\Ut (g)^*\Ut (f)\bigr)=g^* *_{\theta}f(0)
 =\int_{\R^d}\overline{g(\xi)}f(\xi)\,d\xi.
\end{equation}
The resulting unitary extension from \(L_2(\R^d)\) onto \(L_2(\R_\theta^d)\)
is the Plancherel identity. The integral bound and Plancherel give
\[
 \norm{\Ut (f)}_\infty\le\norm f_1,
 \qquad \norm{\Ut (f)}_2=\norm f_2.
\]
Interpolation yields the non-sharp Hausdorff-Young inequality
\begin{equation}\label{eq:nonsharp}
 \norm{\Ut (f)}_q\le\norm f_p.
\end{equation}
In particular, \(\Ut:L_p(\R^d)\to L_q(\R_\theta^d)\) is defined by a unique
bounded extension.

Finally, the (reverse--)Fourier transform is initially defined on \(L_1(\R_\theta^d)\) by
\[
 \Ft (x)(\xi)=\taut\bigl(x\Ut(\xi)^*\bigr).
\]
The endpoint estimates and interpolation similarly yield
\[
 \|\mathcal F_\theta(x)\|_{L_q(\mathbb R^d)}
 \le \|x\|_{L_p(\mathbb R_\theta^d)},
 \qquad 1\le p\le2,\quad \frac1p+\frac1q=1.
\]
Thus $\mathcal F_\theta$ extends uniquely to a bounded map
from $L_p(\mathbb R_\theta^d)$ to $L_q(\mathbb R^d)$.

We need to introduce two isometries. The trace preserving automorphisms
\begin{equation}\label{eq:phase}
 T_t(\Ut(\xi))
 =e^{it\cdot\xi}\Ut(\xi),\qquad \xi,t\in\R^d,
\end{equation}
act isometrically on each \(L_p(\R_\theta^d)\) and satisfy
\[
 \Ut(e^{it\cdot\xi}f(\xi))=T_t(\Ut (f)).
\]
This is the so-called translation and it is used in the differential calculus of \cite[Section 3.1]{MSX2020}, characterizing the displacement.
Moreover, \eqref{eq:weyl} gives
\begin{equation}\label{eq:translation}
 \Ut(f(\cdot-s))
 =\Ut(s/2)(\Ut( f))\Ut(s/2).
\end{equation}
Since $\Ut(s/2)$ is unitary, for fixed $f$, the map
$$ s \longmapsto  \Ut(f(\,\cdot -s\,))$$
is $L_p$-norm-preserving. The identity extends from integrable
functions to \(L_p(\R^d)\) by \eqref{eq:nonsharp}.

We also need to briefly introduce the dilation. For more details, one can refer to \cite[Section 3]{MSX2020}. Define the dilation map
\begin{align*}
 \Psi_{\lambda}
    (\Ut(\xi)) =  U_{\lambda^2\theta}(\xi/\lambda).
\end{align*}
We then have 
\begin{align*}
 \Psi_{\lambda}: L_\infty(\R_\theta^d)&\longrightarrow L_\infty(\R_{\lambda^2\theta}^d),\\
    \Ut(f) &\longmapsto  \lambda^d U_{\lambda^2\theta}(f(\lambda\,\cdot\,))
\end{align*}
and the following identity:
\begin{align}\label{dia-id}
 \|\Psi_{\lambda} \Ut(f)\|_p=\|\lambda^d U_{\lambda^2 \theta}(f(\lambda\,\cdot \,))\|_p=\lambda^{\frac{d}{p}}\|\Ut(f)\|_p.
\end{align}

\subsection{A limit formulation for $\Ut$}
In this part, we are going to compute the limit $$\lim_{A\to \infty}\frac{\norm{\Ut( g_A)}_q}{\norm{g_A}_p}$$ 
for $g_A(\xi)=e^{-A\abs\xi^2}$. To this end, we need to analyze the $L_p$-norm on $L_\infty(\R_\theta^d)$ for different $\theta$. 

We begin with $\theta$ with standard form. When $\theta=\mathbf{0}_d$ is a $d$-dimensional zero matrix, $\Ut$ is exactly the usual Fourier transform; when $$\theta=\begin{pmatrix}0&-1\\1&0\end{pmatrix}=:\mathfrak{s},$$
the trace $\taut$ of $\LpQ{\infty}$ is exactly
\begin{equation*}
 \taut= \frac{1}{(2\pi)^{d/2}}\Tr.
\end{equation*}
For $\theta$ in the standard form
\begin{align}\label{standard-form}
\begin{pmatrix}
0 & -\lambda_1 &        &        &        &        &     &\\
\lambda_1 & 0 &        &        &        &        &     &\\
   &   & \ddots &        &        &        &     &\\
   &   &        & 0      & -\lambda_s      &        &     &\\
   &   &        & \lambda_s     & 0      &        &     &\\
   &   &        &        &        & 0     &     &\\
      &   &        &        &        &   & \ddots   & \\
          &   &        &        &        &   &   &0
\end{pmatrix}=\theta^\prime \oplus \mathbf{0}_t,\qquad \lambda_1,\cdots,\lambda_s>0
\end{align}
where $\pm i\lambda_1,\cdots,\pm i\lambda_s$ are nonzero eigenvalues of $\theta$ and $\mathbf{0}_t$ is the $t$-dimensional zero matrix; see the discussion in \cite[Section 6]{Fedor2020}.
 In this case, the trace $\taut$ of $\LpQ{\infty}$ is exactly
\begin{equation}\label{eq:traceTr}
 \taut=\frac{1}{(2\pi)^t}\left(\int_{\R^{{\rm dim }({\rm ker}(\theta))}}\,d\mu\right)\otimes \frac{\sqrt{|{\rm det}(\theta^\prime)|}}{(2\pi)^{s}}\Tr.
\end{equation}
Here $\Tr$ denotes the canonical trace of $\mathcal{B}(\mathcal H)$, where $\mathcal H=L_2(\mathbb R^{\frac{{\rm rank}(\theta)}{2}})$ is a separable Hilbert space. 

For general skew-symmetric real matrix $\theta$, there exists an orthogonal matrix $Q$ such that $\theta = Q\theta_0Q^{-1}$ where $\theta_0$ is in the form of \eqref{standard-form}. The corresponding change of variables induces a
trace-preserving $*$-isomorphism between $\mathbb R_\theta^d$ and
$\mathbb R_{\theta_0}^d$, and hence isometric isomorphisms between their
$L_p$ spaces for $1\le p\le\infty$. Thus, for the computation of $L_p$
norms, it suffices to consider $\theta$ in the form of \eqref{standard-form}.

\begin{proposition}\label{lem:gaussian}
Let \(g_A(\xi)=e^{-A\abs\xi^2}\) where $\xi\in\R^d$. The following limit holds:
\[
 \lim_{A\to \infty}\left(\frac{pA}{\pi}\right)^{\frac{d}{2p}}\norm{\Ut( g_A)}_q
 =\lim_{A\to \infty}\frac{\norm{\Ut( g_A)}_q}{\norm{g_A}_p}
 = C_p^d.
\]
\end{proposition}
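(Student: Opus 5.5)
By the elementary Gaussian integral, $\norm{g_A}_p=\bigl(\int_{\R^d}e^{-pA\abs\xi^2}\,d\xi\bigr)^{1/p}=\bigl(\tfrac{\pi}{pA}\bigr)^{\frac{d}{2p}}$. This gives the first equality immediately, so everything reduces to computing $\lim_{A\to\infty}\norm{\Ut(g_A)}_q/\norm{g_A}_p$. By the remark preceding the proposition, we may assume $\theta=\theta'\oplus\mathbf 0_t$ is in the standard form \eqref{standard-form}, since $g_A$ is rotation invariant and the change of variables is trace preserving.

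\textbf{Factorization.} Write $\xi=(\xi^{(1)},\dots,\xi^{(s)},\eta)$, with $\xi^{(j)}\in\R^2$ and $\eta\in\R^t$. The cocycle in \eqref{eq:weyl} splits as a sum over the blocks. Hence $\Ut(\xi)$ is the tensor product $U_{\lambda_1\mathfrak s}(\xi^{(1)})\otimes\cdots\otimes U_{\lambda_s\mathfrak s}(\xi^{(s)})\otimes U_{\mathbf 0_t}(\eta)$, and this is compatible with the trace \eqref{eq:traceTr}, which factors as a product. Since $g_A$ factors as a product of Gaussians in each block, $\Ut(g_A)$ is an elementary tensor. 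Its $L_q$ norm, and likewise $\norm{g_A}_p$, is the product of the blockwise quantities. It therefore suffices to show two things: the ratio tends to $C_p$ for each commutative coordinate, and to $C_p^2$ for each $2\times2$ block $\lambda\mathfrak s$.

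\textbf{Commutative part.} For $\theta=0$ in dimension one, $\Ut(g_A)$ is multiplication by $x\mapsto\int e^{-A\xi^2}e^{i\xi x}\,d\xi=\sqrt{\pi/A}\,e^{-x^2/(4A)}$, and the trace is $\frac1{2\pi}\int\,dx$. A direct Gaussian computation shows the ratio equals $\sqrt{p^{1/p}q^{-1/q}}=C_p$ for every $A$, because the factors of $2\pi$ and $A$ cancel. This is just Beckner's extremal computation, and no limit is needed.

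\textbf{Nondegenerate block.} First I reduce $\lambda\mathfrak s$ to $\mathfrak s$ using the dilation identity \eqref{dia-id} with parameter $\mu=\lambda^{-1/2}$. This gives $\Psi_\mu U_{\lambda\mathfrak s}(g_A)=\mu^2U_{\mathfrak s}(g_{A\mu^2})$. So the ratio for $(\lambda\mathfrak s,A)$ equals the ratio for $(\mathfrak s,A/\lambda)$, up to powers of $\mu$ that cancel between the two sides because $2/q+2/p=2$. The limit $A\to\infty$ is therefore unchanged.

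For $\theta=\mathfrak s$, the operator $U_{\mathfrak s}(g_A)$ is the Weyl quantization of a Gaussian. By Mehler's formula it equals $c_A\,e^{-\beta_A N}$, where $N$ is the number operator with spectrum $\{0,1,2,\dots\}$. Then
\[
\norm{U_{\mathfrak s}(g_A)}_q^q=\frac{1}{2\pi}\,c_A^q\,\frac{1}{1-e^{-q\beta_A}}.
\]
The constants $c_A,\beta_A$ are pinned down by two facts. Testing against the trace gives $\taut(U(g_A))=g_A(0)=1$, that is, $\frac{c_A}{2\pi(1-e^{-\beta_A})}=1$. Plancherel gives $\norm{U(g_A)}_2^2=\norm{g_A}_2^2=\frac{\pi}{2A}$. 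Solving these yields $\tanh(\beta_A/2)$ as an explicit function of $A$, with $\beta_A\to0$ and $\beta_A\sim 1/(2A)$ up to a constant as $A\to\infty$. Substituting $c_A\approx 2\pi\beta_A$ and $1-e^{-q\beta_A}\approx q\beta_A$ gives $\norm{U(g_A)}_q^q\approx(2\pi)^{q-1}\beta_A^{q-1}/q$. Comparing with $\norm{g_A}_p^q=(\pi/(pA))^{q/p}$, all powers of $A$ cancel since $q-1=q/p$, and the limit should be $C_p^{2q}$.

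\textbf{Main obstacle.} The step I expect to be hardest is getting the exact Mehler-type identification, and in particular the precise asymptotic constant in $\beta_A$. Everything else is bookkeeping. My first approach would be to avoid Mehler entirely and use only the two trace identities above, once the operator is known to be of the form $c\,e^{-\beta N}$. That form follows from rotation invariance together with the fact that Gaussians form a semigroup under twisted convolution, which can be checked directly from $\Ut(f)\Ut(g)=\Ut(f*_\theta g)$.
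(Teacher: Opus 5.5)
Your proposal is correct and follows the paper's proof: reduction to standard form, tensor factorization, dilation from $\lambda\mathfrak s$ to $\mathfrak s$, the exact Gaussian computation on the commutative factors, and a geometric spectral decomposition of $U_{\mathfrak s}(g_A)$. The paper reads that decomposition off from the Laguerre expansion $e^{-A\abs\xi^2}=\frac{2\pi}{2A+1/2}\sum_{n\ge0}r^nf_{nn}$ with $r=\frac{2A-1/2}{2A+1/2}$; your trace--Plancherel device is a tidy alternative, and solving it gives exactly $\tanh(\beta_A/2)=\frac1{4A}$, i.e.\ $e^{-\beta_A}=r$ and $c_A=\frac{2\pi}{2A+1/2}$, so $\beta_A\sim\frac1{2A}$ with no undetermined constant.

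The one soft spot is your Mehler-free fallback. Rotation invariance makes $U_{\mathfrak s}(g_A)$ diagonal in the number basis, but the semigroup property alone does not force geometric eigenvalues, since any family $\kappa(\beta)e^{-\beta G}$ with $G$ diagonal has it. The ladder formulas \eqref{eq:mb-ladder-actions} with $\partial_{\bar z}g_A=-2Az\,g_A$ close this: they give $a\,U_{\mathfrak s}(g_A)=r\,U_{\mathfrak s}(g_A)\,a$, hence $\mu_n=r\mu_{n-1}$ directly.
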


\begin{proof}
We first prove the result when $d=2$ and 
\(
 \theta=\mathfrak{s}
\). In this case,  $\norm{U_{\mathfrak{s}}( g_A)}_q$ can be directly computed with the orthogonal basis introduced in Appendix~\ref{app}. By the properties of Laguerre functions, for $A>\frac{1}{4}$, we can write
$$ e^{-A|\xi|^2}=\frac{1}{2A+\frac{1}{2}}\sum_{n\geq 0} \left(\frac{2A-\frac{1}{2}}{2A+\frac{1}{2}}\right)^n L_n\left(\frac{|\xi|^2}{2}\right)e^{-\frac{|\xi|^2}{4}}=\frac{2\pi}{2A+\frac{1}{2}}\sum_{n\geq 0}\left(\frac{2A-\frac{1}{2}}{2A+\frac{1}{2}}\right)^n f_{nn}(\xi). $$
Hence 
$$ \norm{U_{\mathfrak{s}}( g_A)}_q^q=\frac{1}{2\pi} \Tr |U_{\mathfrak{s}}(e^{-A|\xi|^2})|^q=(2\pi)^{q-1}\left(2A+\frac{1}{2}\right)^{-q}\sum_{n\geq 0}  \left(\frac{2A-\frac{1}{2}}{2A+\frac{1}{2}}\right)^{qn}$$
Computing the geometric series gives
\[
\norm{U_{\mathfrak{s}}(e^{-A|\xi|^2})}_q=(2\pi)^{1-\frac{1}{q}}
\left[
\left(2A+\tfrac12\right)^q
-
\left(2A-\tfrac12\right)^q
\right]^{-\frac{1}{q}}.\]

The result can extend to general case easily. By the preceding discussion, we only need to consider
\[
 \theta=\bigoplus_{i=1}^{s}\lambda_i\begin{pmatrix}0&-1\\1&0\end{pmatrix}\oplus \mathbf{0}_t=\bigoplus_{i=1}^{s}\lambda_i\mathfrak{s}\oplus \mathbf{0}_t,\qquad  d=2s+t
\]
where $\pm i\lambda_1,\cdots,\pm i\lambda_s$ are the nonzero eigenvalues of $\theta$. For $\xi\in \R^d$, 
$$ e^{-A|\xi|^2}=e^{-A(\xi_1^2+\xi_2^2+\cdots+\xi_d^2)}.$$
We have 
\begin{align*}\Ut (e^{-A|\xi|^2})=&U_{\lambda_1\mathfrak{s}} (e^{-A(\xi_1^2+\xi_2^2)})\otimes\cdots\otimes U_{\lambda_s\mathfrak{s}}(e^{-A(\xi_{2s-1}^2+\xi_{2s}^2)})\\
 & \otimes U_0(e^{-A\xi_{2s+1}^2})\otimes\cdots\otimes U_0(e^{-A\xi_{d}^2})
\end{align*}
By the dilation property \eqref{dia-id}, we obtain
$$\norm{U_{\lambda_i \mathfrak{s}} (e^{-A|\xi|^2})}_q=\lambda_i^{-\frac{1}{q^\prime}}\norm{U_{\mathfrak{s}} (e^{-\frac{A}{\lambda_i}|\xi|^2})}_q=\lambda_i^{\frac{1}{q}}(2\pi)^{1-\frac{1}{q}}
\left[
\left(2A+\tfrac{\lambda_i}{2}\right)^q
-
\left(2A-\tfrac{\lambda_i}{2}\right)^q
\right]^{-\frac{1}{q}}$$
When $A\to \infty$, for arbitrary $\lambda_i>0$, we have
$$ \lambda_i^{\frac{1}{q}}(2\pi)^{1-\frac{1}{q}}
\left[
\left(2A+\tfrac{\lambda_i}{2}\right)^q
-
\left(2A-\tfrac{\lambda_i}{2}\right)^q
\right]^{-\frac{1}{q}}\sim q^{-\frac{1}{q}}\left(\frac{\pi}{A}\right)^{\frac{1}{p}}.$$
By the properties of the Gaussian functions under the usual Fourier transform, we obtain
$$ U_{0} (e^{-Ax^2})=\left(\frac{\pi}{A}\right)^{1/2}
  e^{-\frac{x^2}{4A}}.$$
In this case, the canonical trace is
\(
 \tau_0=\frac{1}{2\pi}
           \int_{\mathbb R}\,dx
\). For $1\le q<\infty$,
\begin{align*}
 \|U_0(g_A)\|_q=q^{-\frac{1}{2q}}
   \left(\frac{\pi}{A}\right)^{\frac{1}{2p}}.
\end{align*}
We can now conclude that, when $A\to \infty$,
\begin{align}\label{asym}
\norm{\Ut (e^{-A|\xi|^2})}_q\sim q^{-\frac{s}{q}}\left(\frac{\pi}{A}\right)^{\frac{s}{p}}\cdot q^{-\frac{t}{2p}} \left(\frac{\pi}{A}\right)^{\frac{t}{2p}}.
\end{align}
Hence 
$$ \lim_{A\to \infty}\left(\frac{pA}{\pi}\right)^{\frac{d}{2p}}\norm{U_{\theta} (e^{-A\abs{\xi-s}^2})}_q=\lim_{A\to \infty}\left(\frac{pA}{\pi}\right)^{\frac{d}{2p}}\norm{U_{\theta} (e^{-A|\xi|^2})}_q=C_p^{d}.$$ 
This completes the proof.
\end{proof}

\section{Complex dominance of the second-order Fr\'echet derivative}\label{complex-dom}
Throughout this section, the complex Banach space
\(L_q(\mathbb R_\theta^d)\) is regarded as a Banach space over
\(\mathbb R\), and all Fréchet derivatives are taken with respect to real scalar parameters. The basic properties of the Fr\'echet derivative, such as the chain rule and the bilinearity of the second-order derivative, one can refer to \cite{Cartan1967}.

For \(Y\in L_q(\mathbb R_\theta^d)\), put 
$$\Phi(Y)=\frac1q\taut(\abs Y^q),\qquad \Omega(Y)=\taut(\abs Y^q)^{p/q}$$
which are two functions from \(L_q(\mathbb R_\theta^d)\) to $\R$. By \cite{Sukochev2019,PotapovSukochev2014}, the functional
\(\Phi(Y)\) is at least twice Fr\'echet differentiable. Hence, we do not distinguish Fr\'echet derivatives and G\^ateaux derivatives in the sequel.

For \(B\in L_q(\mathbb R_\theta^d)\), the first-order derivative at $Y$ in the direction $B$,  is defined by
\[
D\Phi(Y)[B]=
 \left.
 \frac{\partial}{\partial t}
 \Phi(Y+tB)
 \right|_{t=0},
 \quad t\in\mathbb R.
 \]
The second-order derivative, for \(B,C\in L_q(\mathbb R_\theta^d)\), is defined by
\[
D^2\Phi(Y)[B,C]
 =
 \left.
 \frac{\partial^2}{\partial s\,\partial t}
 \Phi(Y+sB+tC)
 \right|_{s=t=0},
 \quad s,t\in\mathbb R.
\]
In particular, the second-order derivative is a bounded real bilinear form
at each fixed \(Y\), including operators with a kernel.
Since 
\[
\left.\frac{d^2}{dt^2}\Phi(Y+tB)\right|_{t=0}
=
\left.\frac1q\frac{d^2}{dt^2}
\tau_\theta\bigl(|Y+tB|^q\bigr)\right|_{t=0},
\]
for fixed \(Y\) and \(B\),
\[
\Phi(Y+tB)
=
\Phi(Y)+tD\Phi(Y)[B]
+\frac{t^2}{2}D^2\Phi(Y)[B,B]+o(t^2)
\qquad (t\to0,\ t\in\R).
\]

The following proposition is already included in \cite{Sukochev2019,PotapovSukochev2014}; we record the proof for completeness.
\begin{proposition}\label{first-formula}
With the notation above, for \(Y,B\in L_q(\R_\theta^d)\),
\[
 D\Phi(Y)[B]=\Rea\,\taut(\abs Y^{q-2}Y^*B).
\]
\end{proposition}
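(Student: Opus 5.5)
The plan is to reduce the statement to the scalar function $t\mapsto\taut(\abs{Y+tB}^q)$ and differentiate it at $t=0$ by means of operator calculus for the function $h(x)=x^{q/2}$ applied to the positive operator $(Y+tB)^*(Y+tB)$.

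Write $Z_t=(Y+tB)^*(Y+tB)=Y^*Y+t(Y^*B+B^*Y)+t^2B^*B$. Then $\Phi(Y+tB)=\frac1q\taut(h(Z_t))$. The key tool is the trace derivative formula
$$\frac{d}{dt}\taut(h(Z_t))=\taut\bigl(h'(Z_t)\dot Z_t\bigr),$$
valid for $h\in C^1$ with suitable growth. For $h(x)=x^{q/2}$ with $q\ge2$ this holds in the noncommutative $L_p$ setting by the results of \cite{PotapovSukochev2014,Sukochev2019}; these are also what give Fr\'echet differentiability of $\Phi$. At $t=0$ we have $\dot Z_0=Y^*B+B^*Y$ and $h'(Z_0)=\frac q2\abs Y^{q-2}$. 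Hence
$$D\Phi(Y)[B]=\tfrac12\taut\bigl(\abs Y^{q-2}(Y^*B+B^*Y)\bigr).$$

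Next I identify this expression with a real part. By traciality,
$$\taut(\abs Y^{q-2}B^*Y)=\taut\bigl(Y\abs Y^{q-2}B^*\bigr)=\overline{\taut\bigl(B\abs Y^{q-2}Y^*\bigr)}.$$
The polar decomposition $Y=V\abs Y$ gives $Y\abs Y^{q-2}Y^*=V\abs Y^{q}V^*$, and more usefully the intertwining $\abs Y^{q-2}Y^*=Y^*\abs{Y^*}^{q-2}$. Using these, the last trace equals $\overline{\taut(\abs Y^{q-2}Y^*B)}$. Summing the two terms yields $\Rea\,\taut(\abs Y^{q-2}Y^*B)$.

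All traces are finite by H\"older's inequality. The operator $\abs Y^{q-2}Y^*$ lies in $L_{q/(q-1)}=L_p$ with norm $\norm Y_q^{q-1}$, and $B\in L_q$. This also shows the functional $B\mapsto D\Phi(Y)[B]$ is bounded, consistent with Fr\'echet differentiability.

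The main obstacle is justifying the trace derivative formula when $Y$ and $B$ are unbounded $\tau$-measurable operators and $h$ is not smooth at $0$ (for $2\le q<3$, $h'$ is only H\"older). I would first prove the formula for $Y,B\in\mathcal S(\R^d_\theta)$, or more generally bounded operators in $L_q$. There the derivative follows from the resolvent/Duhamel expansion, or from the double operator integral representation
$$h(Z_t)-h(Z_0)=T^{Z_t,Z_0}_{h^{[1]}}(Z_t-Z_0)$$
together with continuity of $t\mapsto T^{Z_t,Z_0}_{h^{[1]}}$ at $t=0$ and the diagonal identity $\taut(T^{Z,Z}_{h^{[1]}}(X))=\taut(h'(Z)X)$. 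I would then extend to general $Y,B\in L_q$ by density, using that both sides are continuous in $(Y,B)$ on $L_q\times L_q$: the left side by the Fr\'echet differentiability cited from \cite{PotapovSukochev2014}, and the right side by H\"older and continuity of $Y\mapsto\abs Y^{q-2}Y^*$ from $L_q$ to $L_p$.

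For $1<q<2$ this case does not occur, since the paper's range is $1\le p\le2$, i.e.\ $q\ge2$. For $q=\infty$ the statement is not needed.
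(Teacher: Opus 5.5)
Your proposal is correct and follows essentially the same route as the paper. You differentiate $\taut\bigl(((Y+tB)^*(Y+tB))^{q/2}\bigr)$ via the trace derivative formula to obtain $\frac12\taut\bigl(\abs Y^{q-2}(Y^*B+B^*Y)\bigr)$, and then identify the second term as the complex conjugate of the first by taking adjoints and using traciality; the paper simply cites the derivative formula, so your approximation argument goes further than it does. The polar-decomposition intertwining you invoke is unnecessary, since cyclicity alone gives $\taut(B\abs Y^{q-2}Y^*)=\taut(\abs Y^{q-2}Y^*B)$.
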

\begin{proof}
We first set
\[
A(t)=(Y+tB)^*(Y+tB).
\]
Then
\[
A(t)=Y^*Y+t(Y^*B+B^*Y)+t^2B^*B,
\]
and hence
\[
A(0)=Y^*Y,\qquad A'(0)=Y^*B+B^*Y.
\]
The derivative formula gives
\[
\left.\frac{d}{dt}\tau_\theta(A(t)^{q/2})\right|_{t=0}
=
\frac{q}{2}\,\tau_\theta\bigl(A(0)^{q/2-1}A'(0)\bigr).
\]
We can therefore compute
\begin{align*}
D\Phi(Y)[B]
&=\left.\frac{d}{dt}\Phi(Y+tB)\right|_{t=0}\\
&=\frac1q
  \left.\frac{d}{dt}\tau_\theta(A(t)^{q/2})\right|_{t=0}\\
&=\frac12\tau_\theta\left(
  (Y^*Y)^{q/2-1}(Y^*B+B^*Y)\right)\\
&=\frac12\tau_\theta\left(|Y|^{q-2}Y^*B\right)
 +\frac12\tau_\theta\left(|Y|^{q-2}B^*Y\right).
\end{align*}
Note that
\[\overline{\tau_\theta\left(|Y|^{q-2}Y^*B\right)}
=\tau_\theta\left(
  \left(|Y|^{q-2}Y^*B\right)^*\right)=\tau_\theta\left(B^*Y|Y|^{q-2}\right)=\tau_\theta\left(|Y|^{q-2}B^*Y\right).
\]
Consequently,
\[
D\Phi(Y)[B]
=
\operatorname{Re}\tau_\theta\left(|Y|^{q-2}Y^*B\right).
\]
This proves the assertion.
\end{proof}

The following lemma is a central ingredient in our proof, playing a
role analogous to that of the two-point inequality \cite[Lemma 1]{Beckner1975} in Beckner's
complex hypercontractivity argument. It compares
the second-order derivatives of $\Phi$ along $B$ and $iB$, with the sharp
coefficient $p-1$, the squared modulus of the complex parameter
$i\sqrt{p-1}$ appearing in Beckner's inequality. This comparison
holds for arbitrary, possibly noncommuting operators $Y$ and $B$.

\begin{lemma}\label{lem:traceHessian}
For \(Y,B\in L_q(\R_\theta^d)\),
\begin{equation}\label{eq:traceHessian}
 D^2\Phi(Y)[B,B]\ge(p-1)D^2\Phi(Y)[iB,iB].
\end{equation}
\end{lemma}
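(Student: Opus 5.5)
The approach is to compute both sides of \eqref{eq:traceHessian} explicitly in a singular value basis of \(Y\), and to reduce the inequality to a family of elementary two-variable (\(2\times2\)) inequalities, one for each pair of singular values. The \(2\times2\) inequalities will then be verified by a scalar divided-difference estimate; this is where the constant \(p-1=(q-1)^{-1}\) enters. I use the identity \((p-1)(q-1)=1\) throughout.

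\emph{Reduction.} By \eqref{iso} and \eqref{eq:traceTr}, \(\taut\) is a multiple of \(\int\otimes\Tr\). Both sides of \eqref{eq:traceHessian} are continuous in \((Y,B)\) in \(L_q\), since \(D^2\Phi\) is a bounded bilinear form depending continuously on \(Y\) by \cite{Sukochev2019,PotapovSukochev2014}. So it suffices to prove the inequality for \(Y,B\) finite rank (matrix-valued simple functions in the commutative variable). The commutative variable only contributes an integral of pointwise inequalities, so we may take \(Y,B\) to be matrices with \(\taut=\Tr\).

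\emph{Second-derivative formula.} With \(A(t)=(Y+tB)^*(Y+tB)\) and \(\varphi(x)=x^{q/2}\), we have \(A'=Y^*B+B^*Y\) and \(A''=2B^*B\). The Daleckii--Krein formula gives
\[
\frac{d^2}{dt^2}\Tr\varphi(A(t))\Big|_{t=0}=\Tr\bigl(\varphi'(A)A''\bigr)+\sum_{j,k}\varphi'^{[1]}(a_j,a_k)\,\abs{A'_{jk}}^2 .
\]
Here \(a_j\) are the eigenvalues of \(Y^*Y\), and \(\varphi'^{[1]}\) is the divided difference of \(\varphi'\). I write the singular value decomposition \(Y=\sum_j s_j\,u_j\otimes\overline{v_j}\), completing \((u_j)\) and \((v_j)\) to bases so that zero singular values are included, and set \(b_{jk}=\langle Bv_k,u_j\rangle\).

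Replacing \(B\) by \(iB\) leaves \(A''\) unchanged. In the \(v\)-basis,
\[
A'_{jk}=s_jb_{jk}+s_k\overline{b_{kj}}\quad\text{for }B,\qquad A'_{jk}=i\bigl(s_jb_{jk}-s_k\overline{b_{kj}}\bigr)\quad\text{for }iB.
\]
Hence \(q\bigl(D^2\Phi(Y)[B,B]-(p-1)D^2\Phi(Y)[iB,iB]\bigr)\) splits as a sum over unordered pairs \(\{j,k\}\). Each summand is a Hermitian quadratic form in \((x,y)=(b_{jk},\overline{b_{kj}})\in\mathbb C^2\), and its coefficients depend only on \(s_j,s_k\) and \(q\):
\begin{itemize}
\item the diagonal coefficients come from the term \((2-p)\Tr(\varphi'(A)B^*B)\), namely \((2-p)\varphi'(s_k^2)\abs{x}^2\) and \((2-p)\varphi'(s_j^2)\abs{y}^2\), together with \((2-p)\varphi'^{[1]}(s_j^2,s_k^2)(s_j^2\abs x^2+s_k^2\abs y^2)\);
\item the cross term is \(2p\,\varphi'^{[1]}(s_j^2,s_k^2)\,s_js_k\Rea(x\bar y)\).
\end{itemize}

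\emph{Diagonal case and main step.} For \(j=k\) the form reduces to the commutative Beckner-type computation. Writing \(z=b_{jj}\) and \(s=s_j\), the two second derivatives are proportional to \(s^{q-2}\bigl((q-1)(\Rea z)^2+(\Im z)^2\bigr)\) and to the same expression with \(\Rea z\) and \(\Im z\) swapped. The difference is \(s^{q-2}\bigl((q-1)-(p-1)\bigr)(\Rea z)^2\ge0\), because \((p-1)(q-1)=1\) and \(q\ge2\). For \(j\ne k\) the main obstacle is positivity of the \(2\times2\) form. This is equivalent to the determinant inequality
\[
\Bigl[(2-p)\bigl(\varphi'(s_k^2)+s_j^2\varphi'^{[1]}\bigr)\Bigr]\Bigl[(2-p)\bigl(\varphi'(s_j^2)+s_k^2\varphi'^{[1]}\bigr)\Bigr]\ \ge\ p^2\,s_j^2s_k^2\,\bigl(\varphi'^{[1]}\bigr)^2,
\]
with \(\varphi'^{[1]}=\varphi'^{[1]}(s_j^2,s_k^2)\).

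By homogeneity this becomes a one-variable inequality in \(r=s_j/s_k\in(0,1]\), with \(\varphi'(x)=\frac q2x^{q/2-1}\). I would first try to prove it by substituting the explicit divided difference \(\frac q2\frac{r^{q-2}-1}{r^2-1}\), clearing denominators, and reducing to a convexity or monotonicity statement for \(r\mapsto r^{q}\)-type expressions; the degenerate endpoints \(r\to0\) (kernel directions) and \(r\to1\) (which recovers the diagonal case) serve as checks. If the direct estimate is messy, I would instead prove it by interpolating between \(q=2\), where it is trivial, and the behaviour as \(q\to\infty\). Summing the nonnegative pair contributions then gives \eqref{eq:traceHessian}.
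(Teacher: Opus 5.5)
Your setup is sound and the bookkeeping is right. I checked $A'_{jk}=s_jb_{jk}+s_k\overline{b_{kj}}$, and $i(s_jb_{jk}-s_k\overline{b_{kj}})$ for $iB$. I also checked how the $(j,k)$ and $(k,j)$ terms combine into the $2\times2$ form you list, and the diagonal value $(q-p)s^{q-2}(\Rea z)^2$. The gap is that the argument stops exactly where the constant $p-1$ has to be used. The determinant inequality for $j\ne k$ is the entire content of the lemma, and ``I would try to clear denominators, or else interpolate between $q=2$ and $q\to\infty$'' does not prove it. The interpolation fallback has no mechanism behind it: an inequality known at $q=2$ and asymptotically as $q\to\infty$ gives nothing at intermediate $q$. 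Moreover the inequality is an equality at $s_j=s_k$ for every $q$, so there is no slack to exploit.

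The missing step is short. Since $\varphi'(b)+a\,\varphi'^{[1]}(a,b)=\frac{a\varphi'(a)-b\varphi'(b)}{a-b}=\varphi'(a)+b\,\varphi'^{[1]}(a,b)$ and $x\varphi'(x)=\frac q2x^{q/2}$, your two diagonal coefficients coincide. Both equal $\alpha=(2-p)\frac q2\,\frac{s_j^q-s_k^q}{s_j^2-s_k^2}$. Put $\gamma=p\,s_js_k\,\varphi'^{[1]}(s_j^2,s_k^2)$; the block is then $\alpha(\abs x^2+\abs y^2)+2\gamma\Rea(x\bar y)$, which is positive iff $\alpha\ge\abs\gamma$. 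For $u=s_j>v=s_k>0$ this reads $(2-p)(u^q-v^q)\ge p\,uv\,(u^{q-2}-v^{q-2})$. Set $r=u/v$ and multiply by $q-1$, using $(q-1)(2-p)=q-2$ and $(q-1)p=q$. The inequality becomes $h(r)=(q-2)r^q-qr^{q-1}+qr-(q-2)\ge0$ for $r\ge1$, which follows from $h(1)=h'(1)=0$ and $h''(r)=q(q-1)(q-2)r^{q-3}(r-1)\ge0$. Equivalently, it is $\tanh((q-1)a)\le(q-1)\tanh a$ with $a=\frac12\log r$, i.e.\ concavity of $\tanh$. If $s_k=0<s_j$ then $\gamma=0$ and there is nothing to prove.

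This is precisely the paper's scalar inequality. The paper uses the Hermitian dilation $\begin{pmatrix}0&Y\\Y^*&0\end{pmatrix}$, whose eigenvectors $(u_j,\pm v_j)/\sqrt2$ diagonalize your blocks as $\frac{\alpha+\gamma}2\abs{x+y}^2+\frac{\alpha-\gamma}2\abs{x-y}^2$. Here $\alpha\pm\gamma=\frac q2\bigl(\delta_q(\pm u,v)-(p-1)\delta_q(\mp u,v)\bigr)$, so positivity becomes a pointwise kernel comparison integrated against a positive spectral measure.

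That route also spares two technical points your version still owes. First, your finite-rank reduction rests on joint continuity of $(Y,B)\mapsto D^2\Phi(Y)[B,B]$ on $L_q$, which needs its own justification beyond twice-differentiability. Second, for $2<q<4$ the derivative $\varphi'(x)=\frac q2x^{q/2-1}$ is not Lipschitz at $0$. Your Daleckii--Krein formula for $Y^*Y$ therefore contains $\infty\cdot0$ terms at pairs of zero singular values, and you should first perturb $Y$ to be invertible in the matrix step.
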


\begin{proof} Let \(A,S\in L_q(\mathcal M,\tau)\) be two self-adjoint operators, and let \(E_A\) denote the spectral measure of \(A\), such that
\[
A=\int_{\mathbb R}\lambda\,dE_A(\lambda).
\]
Then the double operator integral theory, e.g. \cite{DOIVNA,Peller2006}, gives
\[ \left.\frac{d}{dt} (A+tS)|A+tS|^{q-2}\right|_{t=0}
 =\int_{\R}\int_{\R}\delta_q(\lambda,\mu)\, d E_A(\lambda)\,S \,dE_A(\mu) \]
 where
\[
 \delta_q(x,y)=
 \begin{cases}
 \displaystyle\frac{x|x|^{q-2}-y|y|^{q-2}}{x-y},&x\ne y,\\[6pt]
 (q-1)|x|^{q-2},&x=y.
 \end{cases}
\]
In particular, \(\delta_q(0,0)=0\).

By the formula of the Fr\'echet derivatives,
$$ \frac{d^2}{dt^2}\frac1q
 \tau (|A+tS|^q)=
 \tau\left(S\cdot \frac{d}{dt}(A+tS)|A+tS|^{q-2} \right),$$
 we obtain
\begin{equation}\label{eq:infiniteTraceDerivative}
 \left.
\frac{d^2}{dt^2}\frac{1}{q}\tau\bigl(|A+tS|^q\bigr)
\right|_{t=0}
=
\tau\left(
S
\int_{\mathbb R}\int_{\mathbb R}
\delta_q(\lambda,\mu)\,
dE_A(\lambda)\,S\,dE_A(\mu)
\right).
\end{equation}
The integral in \eqref{eq:infiniteTraceDerivative} is absolutely
convergent. Indeed,
\[
 0\leq\delta_q(\lambda,\mu)
 \leq(q-1)(|\lambda|^{q-2}+|\mu|^{q-2}),
\]
and therefore
\begin{align*}
&\tau\left(S
\int_{\mathbb R}\int_{\mathbb R}
\delta_q(\lambda,\mu)\,
dE_A(\lambda)\,S\,dE_A(\mu)\right)\\
\leq\,&
(q-1)\tau\left(S
\int_{\mathbb R}\int_{\mathbb R}
(|\lambda|^{q-2}+|\mu|^{q-2})\,
dE_A(\lambda)\,S\,dE_A(\mu)\right)\\
=\,&
2(q-1)\tau\left(|A|^{q-2}S^2\right)\\
\leq\,&
2(q-1)\|A\|_{L_q(\mathcal{M})}^{q-2}\|S\|_{L_q(\mathcal{M})}^2
<\infty,
\end{align*}

Apply \eqref{eq:infiniteTraceDerivative} with
\[
 A=\begin{pmatrix}0&Y\\Y^*&0\end{pmatrix},\qquad
 S=\begin{pmatrix}0&B\\B^*&0\end{pmatrix},\qquad
 J=\begin{pmatrix}I&0\\0&-I\end{pmatrix}.
\]
These are all self-adjoint operators in the noncommutative $L_p$-space  $L_q(\mathcal{M})$ where
$$ \mathcal{M}=L_{\infty}(\R_{\theta}^d)\overline{\otimes} M_2(\mathbb{C}).$$
One easily verifies
\[
 JAJ=-A,\qquad
 \begin{pmatrix}0&iB\\-iB^*&0\end{pmatrix}=iJS,
\]
and
\[
 \Phi(Y+tB)=\frac1{2 q}\tau(|A+tS|^q),\quad \Phi(Y+itB)=\frac1{2 q}\tau (|A+t\cdot iJS|^q).
\]
With a similar calculation as in the proof of \eqref{eq:infiniteTraceDerivative}, we obtain
\[
 \left.\frac{d^2}{dt^2}\frac1q
 \tau(|A+t\cdot iJS|^q)\right|_{t=0}
 =\tau\left(S
\int_{\mathbb R}\int_{\mathbb R}
\delta_q(-\lambda,\mu)\,
dE_A(\lambda)\,S\,dE_A(\mu)\right)
\]
Hence
\begin{align}
D^2\Phi(Y)[B,B]
 &=\frac{1}{2}\tau\left(S
\int_{\mathbb R}\int_{\mathbb R}
\delta_q(\lambda,\mu)\,
dE_A(\lambda)\,S\,dE_A(\mu)\right),
 \label{eq:realDirectionInfinite}\\
D^2\Phi(Y)[iB,iB]
 &=\frac{1}{2}\tau\left(S
\int_{\mathbb R}\int_{\mathbb R}
\delta_q(-\lambda,\mu)\,
dE_A(\lambda)\,S\,dE_A(\mu)\right).
 \label{eq:imaginaryDirectionInfinite}
\end{align}

For any Borel set $\Gamma\subset \R^2$, the map
$$ v:\Gamma \longmapsto  \tau \left(\int_{\Gamma}S\, dE_A(\lambda)\,S \,dE_A(\mu)\right)$$
defines a positive Borel measure $v(\Gamma)$ on $\R^2$. Hence, \eqref{eq:realDirectionInfinite} and
\eqref{eq:imaginaryDirectionInfinite} can be viewed as the integral of $\delta_q(\lambda ,\mu)$ (or $\delta_q(-\lambda ,\mu)$) with respect to the measure $v(\lambda,\mu)$. It remains to check the scalar inequality
\begin{equation}\label{eq:reflectedKernelComparison}
 \delta_q(\lambda ,\mu)\geq (p-1)\delta_q(-\lambda ,\mu)=\frac1{q-1}\delta_q(-\lambda ,\mu),
 \qquad \lambda ,\mu\in\mathbb R.
\end{equation}
Let \(u,v>0\). If $u=v$, then
\[
 \frac{\delta_q(u,v)}{\delta_q(-u,v)}
 =q-1.
\]
For $u\neq v$, assume \(u>v\)
and put \(a=\frac12(\log u -\log v)>0\). Then
\[
 \frac{\delta_q(u,v)}{\delta_q(-u,v)}
 =\frac{u^{q-1}-v^{q-1}}{u^{q-1}+v^{q-1}}
  \frac{u+v}{u-v}
 =\frac{\tanh((q-1)a)}{\tanh a}.
\]
Since \(q-1\geq 1\), monotonicity and concavity of ``\(\tanh\)'' give
\[
 \tanh((q-1)a)\ge\tanh a
 \ge\frac1{q-1}\tanh((q-1)a).
\]
Consequently,
\[
 \frac1{q-1}\delta_q(u,v)
 \le\delta_q(-u,v)\le\delta_q(u,v).
\]
Using
\(\delta_q(-\lambda,-\mu)=\delta_q(\lambda ,\mu)\) and \(\delta_q(\lambda ,\mu)=\delta_q(\mu,\lambda)\), these two inequalities give
\[
 \delta_q(\lambda ,\mu)\ge\frac1{q-1}\delta_q(-\lambda ,\mu).
\]
This
proves \eqref{eq:reflectedKernelComparison}.

Finally, comparing each entry in \eqref{eq:realDirectionInfinite} and
\eqref{eq:imaginaryDirectionInfinite}
 proves the desired result.
\end{proof}
\begin{remark}
Since $S\in L_q(\mathcal{M})$, for $\Gamma_1\times \Gamma_2\subset \R^2$, the measure
$$ v(\Gamma_1\times \Gamma_2)=\|E_A(\Gamma_1)SE_A(\Gamma_2)\|_2^2$$
is possibly infinite. However, we have already proved that
$$ \tau\left(S
\int_{\mathbb R}\int_{\mathbb R}
(|\lambda|^{q-2}+|\mu|^{q-2})\,
dE_A(\lambda)\,S\,dE_A(\mu)\right)<\infty.$$
Hence, the map
$$ v^\prime:\Gamma \longmapsto  \tau \left(\int_{\Gamma}S(|\lambda|^{q-2}+|\mu|^{q-2})\, dE_A(\lambda)\,S \,dE_A(\mu)\right)$$
is a well-defined positive Borel measure. Then \eqref{eq:realDirectionInfinite} and
\eqref{eq:imaginaryDirectionInfinite} can be viewed as the integral of 
$$ \frac{\delta_q(\lambda ,\mu)}{|\lambda|^{q-2}+|\mu|^{q-2}},\qquad \frac{\delta_q(-\lambda ,\mu)}{|\lambda|^{q-2}+|\mu|^{q-2}}$$
where the scalar inequality still applies.
\end{remark}

\Needspace{8\baselineskip}
\begin{lemma}\label{lem:normHessian}
For every nonzero
\(Y\in\LpQ q\) and every \(B\in\LpQ q\), let \(\Omega(Y)=\norm Y_q^p\).  Then 
\begin{equation}\label{eq:normHessian}
 D^2\Omega(Y)[B,B]\ge(p-1)D^2\Omega(Y)[iB,iB].
\end{equation}
\end{lemma}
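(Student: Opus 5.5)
The plan is to write $\Omega=h\circ\Phi$ with $h(x)=(qx)^{p/q}$ and reduce \eqref{eq:normHessian} to a sharpened version of Lemma~\ref{lem:traceHessian}. Since $p/q=p-1$, we have $h'(x)=p\,(qx)^{p-2}>0$ and $h''(x)=p(p-2)q\,(qx)^{p-3}\le0$. Put $N=\taut(\abs Y^q)=q\Phi(Y)>0$ and $z=\taut(\abs Y^{q-2}Y^*B)$. Proposition~\ref{first-formula} gives $D\Phi(Y)[B]=\Rea z$ and $D\Phi(Y)[iB]=\Rea(iz)=-\operatorname{Im}z$. The chain rule for second Fr\'echet derivatives gives
\[
D^2\Omega(Y)[B,B]=h'(\Phi)\Bigl(D^2\Phi(Y)[B,B]-\frac{q(2-p)}{N}(\Rea z)^2\Bigr),
\]
and the same identity with $iB$ in place of $B$ and $\operatorname{Im}z$ in place of $\Rea z$. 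Dividing by $h'>0$, \eqref{eq:normHessian} becomes
\[
D^2\Phi(Y)[B,B]-(p-1)D^2\Phi(Y)[iB,iB]\;\ge\;\frac{q(2-p)}{N}\Bigl((\Rea z)^2-(p-1)(\operatorname{Im}z)^2\Bigr).
\]
Lemma~\ref{lem:traceHessian} alone only says the left side is $\ge0$. That is not enough, because the right side can be positive, for instance when $B=Y$. So a quantitative strengthening of Lemma~\ref{lem:traceHessian} is needed.

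For that strengthening I would work in the $2\times2$ picture of Lemma~\ref{lem:traceHessian}, with $A=\begin{pmatrix}0&Y\\Y^*&0\end{pmatrix}$, $S=\begin{pmatrix}0&B\\B^*&0\end{pmatrix}$, and the positive measure $v$ on $\R^2$. By \eqref{eq:realDirectionInfinite} and \eqref{eq:imaginaryDirectionInfinite}, the left side equals $\frac12\int\bigl(\delta_q(\lambda,\mu)-(p-1)\delta_q(-\lambda,\mu)\bigr)\,dv$. I would also write the first derivatives through the same spectral data, namely $\Rea z=\frac12\tau(A\abs A^{q-2}S)$ and $-\operatorname{Im}z=\frac12\tau(A\abs A^{q-2}\,iJS)$. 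These only see the ``diagonal'' part $\lambda=\mu$ of $v$, in the form $\int\lambda\abs\lambda^{q-2}\,dE_A(\lambda)$ paired with $S$. On the diagonal, the kernel difference is $\delta_q(\lambda,\lambda)-(p-1)\delta_q(-\lambda,\lambda)=(q-1-(p-1))\abs\lambda^{q-2}=(q-p)\abs\lambda^{q-2}$. Off the diagonal, it is nonnegative by \eqref{eq:reflectedKernelComparison}.

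The key step is then a Cauchy--Schwarz estimate. Since $A$ anticommutes with $J$, its spectrum is symmetric, and the spectral projections pair $\pm\lambda$ via $J$. I would split $S$ into its parts commuting and anticommuting with $J$ relative to the spectral decomposition; these correspond to the real and imaginary parts of $z$. Then I would bound $\bigl(\tau(A\abs A^{q-2}S)\bigr)^2\le\tau(\abs A^q)\cdot\tau(\abs A^{q-2}S_{\mathrm{diag}}^2)$ with $\tau(\abs A^q)=2N$. The aim is to show that the diagonal contribution $(q-p)\int\abs\lambda^{q-2}\,dv_{\mathrm{diag}}$ dominates $\frac{q(2-p)}{N}(\Rea z)^2$. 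One checks that $q-p=q(2-p)/p\cdot(p-1)^{0}\cdot\ldots$ matches the constant after the factor $\frac12$ and the normalisation $\tau(\abs A^q)=2N$, with the negative term $-(p-1)(\operatorname{Im}z)^2$ only helping.

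The main obstacle is exactly this last matching. The measure $v$ on a continuous spectrum does not have a literal ``diagonal atom'', so the bound must be done through the weighted form $\tau(\abs A^{q-2}S^2)$ rather than pointwise on $v$. I expect to need the full kernel difference, not just its diagonal values, to absorb $(\Rea z)^2$. My first attempt would be to test the inequality on $B=Y$ (the equality case) to calibrate the constants. I would then prove the general case by minimizing the left side over $B$ with $\Rea z$ fixed, which is a quadratic minimization in $B$ whose minimizer should be $B\propto Y$.
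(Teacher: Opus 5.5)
Your reduction is correct and coincides with the paper's. Write $N=\taut(\abs Y^q)$, $z=\taut(\abs Y^{q-2}Y^*B)$ and $\Delta_Y(B)=D^2\Phi(Y)[B,B]-(p-1)D^2\Phi(Y)[iB,iB]$. Then the lemma is equivalent to $\Delta_Y(B)\ge\frac{q-p}{N}\bigl((\Rea z)^2-(p-1)(\operatorname{Im}z)^2\bigr)$. Note that $q(2-p)=q-p$ exactly because $pq=p+q$, so there is no constant left to ``match''. You also correctly observe that Lemma~\ref{lem:traceHessian} alone is not enough.

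\textbf{The gap.} The strengthened bound $\Delta_Y(B)\ge(q-p)(\Rea z)^2/N$ is never established, and it is the entire content of the lemma beyond Lemma~\ref{lem:traceHessian}. Your candidate routes do not close it:
\begin{itemize}
\item The route through the weighted form $\tau(\abs A^{q-2}S^2)$ cannot work. The kernel $\delta_q(\lambda,\mu)-(p-1)\delta_q(-\lambda,\mu)$ vanishes identically on the antidiagonal $\mu=-\lambda$, since $(p-1)(q-1)=1$. So $\Delta_Y(B)$ dominates no positive multiple of $\tau(\abs A^{q-2}S^2)$. Concretely, for $B=iY$ one has $\Delta_Y(iY)=N-(p-1)(q-1)N=0$, while $\tau(\abs A^{q-2}S^2)=\tau(\abs A^q)=2N$.
\item The diagonal Cauchy--Schwarz is stated only as an aim, with an undefined $S_{\mathrm{diag}}$.
\item The closing assertion that the constrained minimizer ``should be $B\propto Y$'' is not backed by anything in the proposal.
\end{itemize}

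\textbf{The missing idea.} The paper uses the polarization $\tilde\Delta_Y(B,C)$ of $\Delta_Y$ together with two exact identities. They come from the homogeneity and the phase invariance of $\Phi$:
\begin{itemize}
\item differentiating $D\Phi((1+t)Y)[B]=(1+t)^{q-1}\Rea z$ gives $D^2\Phi(Y)[Y,B]=(q-1)\Rea z$;
\item differentiating $D\Phi((1+it)Y)[iB]=(1+t^2)^{(q-2)/2}\Rea\bigl((i+t)z\bigr)$ gives $D^2\Phi(Y)[iY,iB]=\Rea z$.
\end{itemize}
Hence $\tilde\Delta_Y(Y,B)=(q-p)\Rea z$ and $\Delta_Y(Y)=(q-p)N$. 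Applying Lemma~\ref{lem:traceHessian} to $B-\alpha Y$ with $\alpha=\Rea z/N$ yields $\Delta_Y(B)=\Delta_Y(B-\alpha Y)+(q-p)(\Rea z)^2/N\ge(q-p)(\Rea z)^2/N$. This is just Cauchy--Schwarz for the positive semidefinite form $\tilde\Delta_Y$ with test vector $Y$. It is exactly what makes $B\propto Y$ extremal, and it needs no further spectral analysis.

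Your diagonal idea can in fact be rescued. Apply Cauchy--Schwarz to $\tau\bigl(A\abs A^{q-2}E_A(\Gamma_k)SE_A(\Gamma_k)\bigr)$ over a partition $\{\Gamma_k\}$. Refine the partition so that $\bigcup_k\Gamma_k\times\Gamma_k$ shrinks to $\{\lambda=\mu\}$; this gives $(2\Rea z)^2\le2N\int_{\{\lambda=\mu\}}\abs\lambda^{q-2}\,dv$. Then use that the kernel equals $(q-p)\abs\lambda^{q-2}$ on the diagonal and is nonnegative elsewhere. However, that argument is absent from your proposal, and it is considerably heavier than the two-line algebra above.
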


\begin{proof}
Fix \(Y\ne0\), for real \(t,s\), the definition of the second derivative gives
\[
 D^2\Phi(Y)[Y,B]
 =
 \left.\frac{\partial}{\partial t}\right|_{t=0}
 \left.\frac{\partial}{\partial s}\right|_{s=0}
 \Phi\bigl((1+t)Y+sB\bigr).
\]
Applying the derivative formula and Proposition~\ref{first-formula}, we obtain
\[
 \left.\frac{\partial}{\partial s}
 \Phi\bigl((1+t)Y+sB\bigr)\right|_{s=0}
 =
 \operatorname{Re}\tau_\theta
 \bigl(|(1+t)Y|^{q-2}((1+t)Y)^*B\bigr).
\]
Consequently,
\[
 \left.\frac{\partial}{\partial s}
 \Phi\bigl((1+t)Y+sB\bigr)\right|_{s=0}
 =
 (1+t)^{q-1}
 \operatorname{Re}\tau_\theta(|Y|^{q-2}Y^*B).
\]
Differentiating with respect to \(t\) at \(t=0\) gives
\[
 \begin{aligned}
 D^2\Phi(Y)[Y,B]=
 \left.\frac{d}{dt}(1+t)^{q-1}\right|_{t=0}\cdot
 \operatorname{Re}\tau_\theta(|Y|^{q-2}Y^*B)=(q-1)D\Phi(Y)[B].
 \end{aligned}
\]
Taking $Y=B$ gives 
$$ D^2\Phi(Y)[Y,Y]=(q-1)\taut(\abs Y^q).$$

Again by the definition of the second-order derivative,
\[
 D^2\Phi(Y)[iY,iB]
 =
 \left.\frac{d}{dt}D\Phi((1+it)Y)[iB]\right|_{t=0},
 \qquad t\in\mathbb R.
\]
Applying Proposition~\ref{first-formula} again gives
\begin{align*}
 D\Phi((1+it)Y)[iB]
 &=
 \operatorname{Re}\tau_\theta
 \bigl(|(1+it)Y|^{q-2}((1+it)Y)^*iB\bigr)\\
 &=(1+t^2)^{(q-2)/2}
   \operatorname{Re}\Bigl((i+t)
   \tau_\theta(|Y|^{q-2}Y^*B)\Bigr).
\end{align*}
Differentiating the preceding
identity therefore yields
\begin{align*}
 D^2\Phi(Y)[iY,iB]&=
 \left.\frac{d}{dt}D\Phi((1+it)Y)[iB]\right|_{t=0}\\
 &=\operatorname{Re}\Bigl(\tau_\theta(|Y|^{q-2}Y^*B)\Bigr)\\
 &=D\Phi(Y)[B].
\end{align*}
Taking $Y=B$ gives $$ D^2\Phi(Y)[iY,iY]=\taut(\abs Y^q).$$

For convenience, we abbreviate
\[
 \alpha=\frac{D\Phi(Y)[B]}{q\Phi(Y)},\qquad
 \beta=\frac{D\Phi(Y)[iB]}{q\Phi(Y)},
\]
and Lemma \ref{lem:traceHessian} gives
\[
 \Delta_Y(B):=D^2\Phi(Y)[B,B]-(p-1)D^2\Phi(Y)[iB,iB]\geq 0.
\]
The bilinearity of the second-order Fr\'echet derivative gives
\begin{equation}
\begin{aligned}
 \Delta_Y(B-\alpha Y)=&D^2\Phi(Y)[B-\alpha Y,B-\alpha Y]-(p-1)D^2\Phi(Y)[i(B-\alpha Y),i(B-\alpha Y)]\\
 =& D^2\Phi(Y)[B,B]-2\alpha D^2\Phi(Y)[B,Y]+\alpha^2D^2\Phi(Y)[Y,Y]\\
 &-(p-1)\Big(D^2\Phi(Y)[iB,iB]-2\alpha D^2\Phi(Y)[iB,iY]+\alpha^2D^2\Phi(Y)[iY,iY]\Big)\\
 =&\Delta_Y(B)-2\alpha\Big(D^2\Phi(Y)[B,Y]-(p-1)D^2\Phi(Y)[iB,iY]\Big)\\
 &+\alpha^2\Big(D^2\Phi(Y)[Y,Y]-(p-1)D^2\Phi(Y)[iY,iY]\Big)\\
  =&\Delta_Y(B)-2\alpha\Big((q-1)D\Phi(Y)[B]-(p-1)D\Phi(Y)[B]\Big)\\
 &+\alpha^2\Big((q-1)\taut(\abs Y^q)-(p-1)\taut(\abs Y^q)\Big)\\
 =&\Delta_Y(B)-(q-p)\alpha^2\taut(\abs Y^q).
\end{aligned}\label{eq:radialSquare}
\end{equation}
On the other hand, 
\[\Omega(Y)=\taut(\abs Y^q)^{p/q}=(q\Phi(Y))^{p/q}.\]
Denoting $h(x)=|x|^{p/q}$, the chain rule of the Fr\'echet derivative gives
\[D^2\Omega(Y)[B,B]=h^{\prime\prime}(q\Phi(Y))\big(qD\Phi(Y)[B]\big)^2+qh^{\prime}(q\Phi(Y))D^2\Phi(Y)[B,B].
\]
Hence
\[
 \frac{D^2\Omega(Y)[B,B]}{p\Omega(Y)}
 =\frac{D^2\Phi(Y)[B,B]}{q\Phi(Y)}-(q-p)\alpha^2.
\]
Similarly we get
\[
 \frac{D^2\Omega(Y)[iB,iB]}{p\Omega(Y)}
 =\frac{D^2\Phi(Y)[iB,iB]}{q\Phi(Y)}-(q-p)\beta^2.
\]
Combining these two formulae and
\eqref{eq:radialSquare}, we obtain
\begin{align*}
 \frac{D^2\Omega(Y)[B,B]-(p-1)D^2\Omega(Y)[iB,iB]}
 {p\Omega(Y)}
 =\frac{\Delta_Y(B-\alpha Y)}{q\Phi(Y)}+(p-1)(q-p)\beta^2\ge0.
\end{align*}
This proves the assertion.
\end{proof}

\section{Monotonicity of the flow}\label{sec-main}
Recall that, for $A>0$, we define
\[
 \mathcal L_A(f)=\left(\frac{pA}{\pi}\right)^{\frac{d}{2}}\int_{\R^d}
  \norm{\Ut\bigl(f(\cdot) e^{-A\abs{\,\cdot-s\,}^2}\bigr)}_{\LpQ q}^{p}
 \,ds.
\]
In this section, we are going to prove its monotonicity, which yields the sharp Hausdorff--Young inequality directly.
\subsection{Auxiliary results}
Fix a nonzero complex-valued \(f\in C_c^\infty(\R^d)\). For
\(a\in\R_+\), \(b\in\R^d\), set
\[
 f_{a,b}(\xi)=f(\xi)e^{-a\abs\xi^2+b\cdot \xi}\]
 and put
 \begin{equation}\label{eq:VI}
 V(a,b)=\norm{\Ut (f_{a,b})}_q^p,\qquad
 I(a,b)=\int_{\R^d}\abs{f_{a,b}(\xi)}^p\,d\xi.
\end{equation}
The non-sharp Hausdorff-Young inequality directly gives
\begin{equation}\label{eq:VIbound}
 0<V(a,b)\le I(a,b).
\end{equation}
The smoothness is trivial for \((a,b)\mapsto I(a,b)\). Moreover, the map \((a,b)\mapsto f_{a,b}\) is smooth in \(L_p(\R^d)\), which gives \((a,b)\mapsto \Ut(f_{a,b})\) smooth in $\LpQ{q}$ by the non-sharp Hausdorff-Young inequality. Hence there is no ambiguity when taking derivatives. 

 In the following proposition, we make the convention that
 $$ \Delta_b V(a,b)=(\partial_{b_1}^2+\cdots+\partial_{b_d}^2)V(a,b),\quad \partial_1 V=\partial_a V(a,b).$$
\begin{proposition}\label{prop:heatInequality}
With the notation above,
\begin{equation}\label{eq:heatInequality}
 (\Delta_b+p\partial_1)V\ge0.
\end{equation}
\end{proposition}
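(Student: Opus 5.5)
The plan is to express both $\Delta_b V$ and $\partial_a V$ through Fr\'echet derivatives of $\Omega(Y)=\norm{Y}_q^p$ at $Y=\Ut(f_{a,b})$. The first-order terms that appear are then rewritten, using the invariance of $\Omega$ under the phase automorphisms $T_t$ of \eqref{eq:phase}, as second-order derivatives in the imaginary directions. After that the inequality reduces to Lemma~\ref{lem:normHessian}.

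\emph{Step 1: derivatives of the path.} Fix $(a,b)$ and write $Y=\Ut(f_{a,b})$, which is nonzero by \eqref{eq:VIbound}. Set $B_j=\Ut(\xi_jf_{a,b})$ and $C_j=\Ut(\xi_j^2f_{a,b})$ for $j=1,\dots,d$. Since $f\in C_c^\infty(\R^d)$, all the functions $\xi^\alpha f_{a,b}$ lie in $L_p(\R^d)$ and depend smoothly on $(a,b)$. We have $\partial_{b_j}f_{a,b}=\xi_jf_{a,b}$ and $\partial_af_{a,b}=-\abs\xi^2f_{a,b}$. By the non-sharp bound \eqref{eq:nonsharp}, $\Ut$ is a bounded real-linear map, so it commutes with these derivatives. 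The chain rule for Fr\'echet derivatives then gives
\[
 \partial_{b_j}^2V=D^2\Omega(Y)[B_j,B_j]+D\Omega(Y)[C_j],\qquad
 \partial_aV=-\sum_{j}D\Omega(Y)[C_j].
\]
Hence
\[
 (\Delta_b+p\partial_1)V=\sum_{j=1}^d\Bigl(D^2\Omega(Y)[B_j,B_j]+(1-p)D\Omega(Y)[C_j]\Bigr).
\]

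\emph{Step 2: the imaginary direction.} For real $t$, consider $\Ut(f_{a,b}e^{it\xi_j})=T_{te_j}(Y)$. Because $T_{te_j}$ is a trace-preserving automorphism, it is isometric on $\LpQ q$, so $t\mapsto\Omega(T_{te_j}Y)$ is constant. Its $t$-derivatives are those of the smooth path $t\mapsto \Ut(f_{a,b}e^{it\xi_j})$, whose first and second derivatives at $t=0$ are $iB_j$ and $-C_j$. Differentiating twice at $t=0$ therefore gives
\[
 0=D^2\Omega(Y)[iB_j,iB_j]-D\Omega(Y)[C_j],
 \quad\text{that is,}\quad D\Omega(Y)[C_j]=D^2\Omega(Y)[iB_j,iB_j].
\]

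\emph{Step 3: conclusion.} Substituting Step 2 into Step 1 yields
\[
 (\Delta_b+p\partial_1)V=\sum_{j=1}^d\Bigl(D^2\Omega(Y)[B_j,B_j]-(p-1)D^2\Omega(Y)[iB_j,iB_j]\Bigr).
\]
Each summand is nonnegative by Lemma~\ref{lem:normHessian}, applied with $B=B_j$ and the nonzero $Y$.

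The main obstacle is justifying the twice differentiability of $\Omega$ along these paths. We need $\Omega$ to be $C^2$ near $Y\neq0$, which follows from the differentiability of $\Phi$ cited from \cite{Sukochev2019,PotapovSukochev2014} together with smoothness of $x\mapsto x^{p/q}$ on $(0,\infty)$. We also need the paths to be $C^2$ into $\LpQ q$, which follows from compact support of $f$ and \eqref{eq:nonsharp}. This is what legitimizes the second-order chain rule in Steps 1 and 2. The endpoint $p=1$ ($q=\infty$) would need separate treatment or a limiting argument. The algebraic core, Step 2, is the key new observation: the phase invariance converts the awkward first-order term into the imaginary-direction Hessian.
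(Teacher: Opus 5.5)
Your proposal is correct and follows the paper's proof essentially step for step. Both arguments use the chain-rule expressions for $\partial_{b_j}^2V$ and $\partial_aV$, then the identity $D\Omega(Y)[C_j]=D^2\Omega(Y)[iX_j,iX_j]$ obtained by differentiating the constant map $t\mapsto\Omega(\Ut(e^{it\xi_j}f_{a,b}))$ twice, and then Lemma~\ref{lem:normHessian} in each direction. The only differences are that you substitute the phase identity before invoking the lemma, and that you explicitly flag the endpoint $p=1$, $q=\infty$, where the cited differentiability results do not apply and which the paper passes over silently.
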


\begin{proof}
At a fixed \((a,b)\), write
\[
Y=\Ut (f_{a,b}),\qquad
 X_j=\Ut(\xi_j f_{a,b}),\qquad
 C_j=\Ut(\xi_j^2 f_{a,b}),\quad j=1,2,\cdots,d.
\]
Differentiation in the parameters gives
\[
  \partial_{b_j}Y=X_j,\quad \partial_{b_j}^2Y=C_j,
 \quad \partial_aY=-\sum_{j=1}^{d}C_j.
\]
Since translation  is unitary, the map
\[t\longmapsto\norm{\Ut(e^{it\xi_j}f_{a,b})}_q^p=\Omega(\Ut(e^{it\xi_j}f_{a,b}))\]
is constant. Its second derivative at zero is therefore
\begin{align*}
 \frac{d^2}{dt^2}\Omega(\Ut(e^{it\xi_j}f_{a,b}))|_{t=0}&=D^2\Omega(\Ut(f_{a,b}))[i\Ut(\xi_j f_{a,b}),i\Ut(\xi_j f_{a,b})]\\
 &\hspace{15mm}-D\Omega(\Ut(f_{a,b}))[\Ut(\xi_j^2f_{a,b})]\\
 &=D^2\Omega(Y)[iX_j,iX_j]-D\Omega(Y)[C_j]\\
 &=0.
\end{align*}
This yields 
$$D^2\Omega(Y)[iX_j,iX_j]=D\Omega(Y)[C_j].$$
By the second-order chain rule,
\[
 \partial_{b_j}^2V
 =D^2\Omega(Y)[X_j,X_j]+D\Omega(Y)[C_j].
\]
Lemma~\ref{lem:normHessian} then gives
\[
 \partial_{b_j}^2V
 \ge (p-1)D^2\Omega(Y)[iX_j,iX_j]
     +D\Omega(Y)[C_j]=pD\Omega(Y)[C_j].
\]
Summing over \(j\) proves the inequality for \(V(a,b)\).
\end{proof}

Let \(H_t\) denote the ordinary heat operator:
\begin{equation}\label{eq:heatKernel}
 H_t g(x)=\left(\frac1{4\pi t}\right)^{\frac{d}{2}}\int_{\R^d}
 e^{-\abs{x-y}^2/(4t)}g(y)\,dy,\qquad t>0,
\end{equation}
and \(H_0g=g\).

\begin{proposition}\label{prop:comparison}
For \(0\le a\le A\),
\begin{equation}\label{eq:comparison}
 V(a,b)\le H_{(A-a)/p}\bigl[V(A,\cdot)\bigr](b).
\end{equation}
\end{proposition}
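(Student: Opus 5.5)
The plan is a standard parabolic comparison (Duhamel/semigroup) argument. Proposition~\ref{prop:heatInequality} says that $V$ is a subsolution of a backward heat equation in the variable $a$. Fix $A>a\ge0$ and $b$, and for $t\in[a,A]$ set
\[
 W(t)=H_{(t-a)/p}\bigl[V(t,\cdot)\bigr](b).
\]
Then $W(a)=V(a,b)$ and $W(A)=H_{(A-a)/p}[V(A,\cdot)](b)$, so it suffices to show that $W$ is nondecreasing on $[a,A]$.

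Formally, using $\partial_\tau H_\tau=\Delta H_\tau$ and the fact that $H_\tau$ commutes with $\Delta_b$, we get
\[
 W'(t)=H_{(t-a)/p}\Bigl[\tfrac1p\Delta_bV(t,\cdot)+\partial_1V(t,\cdot)\Bigr](b)
 =\tfrac1p H_{(t-a)/p}\bigl[(\Delta_b+p\partial_1)V(t,\cdot)\bigr](b)\ge0.
\]
The last inequality holds by \eqref{eq:heatInequality} together with positivity of the heat kernel. Integrating from $a$ to $A$ then gives \eqref{eq:comparison}.

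The main work is justifying these manipulations, since $V(t,\cdot)$ is not bounded in $b$. There are three points to check.

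\begin{enumerate}
\item \emph{Growth.} Since $f\in C_c^\infty$ has support in a ball of radius $R$, \eqref{eq:VIbound} gives $V(t,b)\le I(t,b)\le C e^{pR\abs b}$. The derivatives in $a$ and $b$ obey similar bounds. To see this, write $\partial_{b_j}V=D\Omega(Y)[X_j]$ and estimate $\abs{D\Omega(Y)[X]}\le p\norm{Y}_q^{p-1}\norm X_q$ via Proposition~\ref{first-formula} and H\"older. The second derivatives are handled the same way, using the bounded bilinear form $D^2\Omega$. The operators $X_j, C_j$ are bounded by the non-sharp inequality \eqref{eq:nonsharp}: for instance $\norm{X_j}_q\le\norm{\xi_jf_{a,b}}_p\le Ce^{R\abs b}$, uniformly for $t$ in compact sets.
\item \emph{Differentiation under the integral.} Exponential growth is integrable against the Gaussian kernel, uniformly for $t\in[a,A]$. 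This justifies differentiating under the integral and integrating by parts twice in $y$, which moves $\Delta_y$ from the kernel onto $V(t,y)$ with no boundary terms.
\item \emph{The endpoint $t=a$.} Here the kernel degenerates to $\delta_b$, so I would instead work on $[a+\varepsilon,A]$. This gives $H_{\varepsilon/p}[V(a+\varepsilon,\cdot)](b)\le H_{(A-a)/p}[V(A,\cdot)](b)$. Letting $\varepsilon\to0$ and using joint continuity of $V$ together with the locally uniform exponential bounds, the left side converges to $V(a,b)$.
\end{enumerate}

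If $a=0$ is allowed, note that $f_{0,b}$ still makes sense and all the estimates above hold at $a=0$, so nothing changes.

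The step I expect to be the main obstacle is the growth bookkeeping in the first point: obtaining uniform exponential-in-$b$ bounds on $\partial_a V$ and $\Delta_bV$. This needs the first and second Fréchet derivatives of $\Omega$ to be controlled by norms of $Y$, $X_j$ and $C_j$. One subtlety is that $D^2\Omega$ involves $\norm{Y}_q^{p-2}$, which may blow up when $Y$ is small. However, \eqref{eq:VIbound} gives $Y\ne0$, and continuity gives a positive lower bound for $\norm Y_q$ on compact sets. For large $\abs b$, I would bound $\norm{Y}_q^{p-2}$ using $\norm{Y}_q\le Ce^{R\abs b}$ together with the inequality $\norm{Y}_q^{p-2}\norm X_q^2\lesssim$ (products already controlled). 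If that lower bound proves troublesome, I would avoid pointwise second derivatives and run the argument in weak form: test \eqref{eq:heatInequality} against the heat kernel and integrate by parts only once, which requires first-derivative bounds only.
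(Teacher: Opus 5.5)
Your argument is essentially the paper's: both show that $s\mapsto H_{s/p}[V(a+s,\cdot)](b)$ is nondecreasing by differentiating it, using $\partial_\tau H_\tau=\Delta H_\tau$, Proposition~\ref{prop:heatInequality} and positivity of the heat kernel, with the exponential bound $V(a,b)\le\|f\|_p^pe^{pR|b|}$ from \eqref{eq:VIbound} justifying the manipulations; your handling of the endpoint and of derivative growth is more careful than the paper's brief justification.

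One caveat concerns your global bound on second derivatives. For $p<2$ an upper bound $\|Y\|_q\le Ce^{R|b|}$ cannot control $\|Y\|_q^{p-2}$, so that proposed bound on $D^2\Omega$ goes the wrong way. Your weak-form fallback does close the argument. It needs only the first-derivative bounds $|D\Omega(Y)[X]|\le p\|Y\|_q^{p-1}\|X\|_q$ (with $p-1\ge0$) and local $C^2$ regularity of $V$. Alternatively, a Bernstein-type estimate $\|X_j\|_q\le R\|Y\|_q$ for the isometric translations $T_t$ gives $|\partial_{b_j}^2V|\lesssim R^2V$ directly.
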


\begin{proof}
The comparison follows directly by integrating
\eqref{eq:heatInequality} along the heat flow. Fix \(0\le a \leq A\)
and \(b\in\R^d\). For \(0\le s\le A-a\), consider the function
\[
H_{s/p}[V(a+s,\cdot)](b).
\]
With this notation, we treat $V(a+s,\cdot)$ as single-variable function. For \(0\leq s \leq A-a\), differentiation under the integral and integration
by parts give
\begin{align*}
\frac{d}{ds}H_{s/p}[V(a+s,\cdot)](b)
&=
\frac1p H_{s/p}[\Delta_bV(a+s,\cdot)](b)+
H_{s/p}[\partial_1V(a+s,\cdot)](b)\\
&=
\frac1p H_{s/p}
 \bigl[(\Delta_b+p\partial_1)V(a+s,\cdot)\bigr](b).
\end{align*}
By Proposition~\ref{prop:heatInequality}, $(\Delta_b+p\partial_1)V$ is nonnegative. Since the heat operator preserves
nonnegativity,
\[
\frac{d}{ds}H_{s/p}[V(a+s,\cdot)](b)\ge0.
\]
Consequently,
\[
V(a,b)
=H_0[V(a,\cdot)](b)
\le
H_{(A-a)/p}[V(A,\cdot)](b).
\]
The case \(a=A\) is immediate.

To verify the convergence of the heat integrals, choose \(R>0\) such that
\(\supp f\subset B(0,R)\). Then, uniformly for \(a\ge0\),
\[
0<V(a,b)\le I(a,b)
\le \|f\|_p^p e^{pR|b|}.
\]
This exponential growth is dominated by the Gaussian heat kernel
and its derivatives, which justifies the preceding differentiation
and integration by parts.
\end{proof}

\subsection{Main result}
\begin{theorem}\label{thm:monotonicity}
For every complex-valued \(f\in L_p(\R^d)\), the quantity
\(\mathcal L_A(f)\) in \eqref{eq:localization} is finite and
nondecreasing for \(A> 0\). Moreover,
\begin{equation}\label{eq:limits}
 \lim_{A\to 0^+}\mathcal L_A(f)=\norm{\Ut (f)}_q^p,
 \qquad
 \lim_{A\to\infty}\mathcal L_A(f)=C_p^{dp}\norm f_p^p.
\end{equation}
\end{theorem}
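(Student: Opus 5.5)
The plan is to rewrite $\mathcal L_A(f)$ as a heat-semigroup average of the function $V$ from \eqref{eq:VI}, and then read off monotonicity from Proposition~\ref{prop:comparison}. I first treat nonzero $f\in C_c^\infty(\R^d)$ and pass to general $f\in L_p(\R^d)$ by density at the end.

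\textbf{Heat-kernel form and monotonicity.} Expanding the square gives
\[
f(\xi)e^{-A\abs{\xi-s}^2}=e^{-A\abs s^2}\,f_{A,2As}(\xi),
\qquad\text{so}\qquad
\norm{\Ut(f e^{-A\abs{\cdot-s}^2})}_q^p=e^{-pA\abs s^2}V(A,2As).
\]
Substitute $b=2As$, so that $ds=(2A)^{-d}db$ and $\abs s^2=\abs b^2/(4A^2)$. The prefactor becomes $(pA/\pi)^{d/2}(2A)^{-d}=(p/(4\pi A))^{d/2}$, which is exactly the normalisation of the heat kernel \eqref{eq:heatKernel} at time $A/p$. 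Hence
\[
\mathcal L_A(f)=H_{A/p}\bigl[V(A,\cdot)\bigr](0).
\]
Finiteness follows from $V(A,b)\le\|f\|_p^pe^{pR\abs b}$, since this exponential growth is dominated by the Gaussian kernel.

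Now take $0<A\le A'$. Apply Proposition~\ref{prop:comparison} with $a=A$ and upper parameter $A'$, then use positivity of $H_{A/p}$ and the semigroup law $H_{A/p}H_{(A'-A)/p}=H_{A'/p}$:
\[
\mathcal L_A(f)\le H_{A/p}\bigl[H_{(A'-A)/p}V(A',\cdot)\bigr](0)=\mathcal L_{A'}(f).
\]
This is the monotonicity.

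\textbf{Limit $A\to0^+$.} The map $(a,b)\mapsto V(a,b)$ is continuous, it has at most exponential growth uniformly in $a\ge0$, and $H_{A/p}$ is an approximate identity. Together these give $H_{A/p}[V(A,\cdot)](0)\to V(0,0)=\norm{\Ut(f)}_q^p$.

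\textbf{Limit $A\to\infty$.} This is the step I expect to be the main obstacle, because it needs a quantitative localization estimate. Fix $s$ and write
\[
f(\xi)e^{-A\abs{\xi-s}^2}=f(s)g_A(\xi-s)+\bigl(f(\xi)-f(s)\bigr)g_A(\xi-s).
\]
For the main term, the translation identity \eqref{eq:translation} and Proposition~\ref{lem:gaussian} give
\[
\norm{\Ut(g_A(\cdot-s))}_q^p=\norm{\Ut(g_A)}_q^p\sim C_p^{dp}\,(\pi/(pA))^{d/2}.
\]
For the error term I use the non-sharp bound \eqref{eq:nonsharp} together with $\abs{f(\xi)-f(s)}\le\|\nabla f\|_\infty\abs{\xi-s}$. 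Its $L_p$ norm to the $p$-th power is then $O(A^{-p/2})\cdot A^{-d/2}$, and it is supported essentially near $\supp f$, up to Gaussian tails that must be handled separately.

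Multiplying by $(pA/\pi)^{d/2}$ and integrating in $s$, I use the triangle inequality in $L_p(ds)$ to split the two terms. The main term contributes $C_p^{dp}\norm f_p^p$. The error term contributes $o(1)$; away from $\supp f$ it is controlled by $\int_{\R^d}\abs{f(\xi)}^pe^{-pA\abs{\xi-s}^2}d\xi$, again via \eqref{eq:nonsharp}.

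\textbf{Extension to $f\in L_p(\R^d)$.} By \eqref{eq:nonsharp} and Fubini,
\[
\mathcal L_A(f)\le\Bigl(\frac{pA}{\pi}\Bigr)^{d/2}\iint\abs{f(\xi)}^pe^{-pA\abs{\xi-s}^2}\,d\xi\,ds=\norm f_p^p .
\]
So $f\mapsto\mathcal L_A(f)^{1/p}$ is a seminorm, namely the $L_p(ds)$ norm of a linear map, bounded by $\norm f_p$ uniformly in $A$. It is therefore uniformly Lipschitz in $f$. Consequently, finiteness, monotonicity and both limits in \eqref{eq:limits} pass from $C_c^\infty$ to all of $L_p(\R^d)$ by approximation. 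Here the $A\to0^+$ limit uses continuity of $\Ut:L_p\to L_q$, and the case $f=0$ is trivial.
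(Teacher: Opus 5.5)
Your proposal is correct and follows the paper's proof essentially step for step. It uses the heat-kernel representation $\mathcal L_A(f)=H_{A/p}[V(A,\cdot)](0)$ together with Proposition~\ref{prop:comparison} and the semigroup law for monotonicity, dominated convergence for $A\to0^+$, the split $f(s)g_A(\cdot-s)+(f-f(s))g_A(\cdot-s)$ with the non-sharp bound for $A\to\infty$, and the uniform Lipschitz estimate for the density step. The only difference is in the $A\to\infty$ remainder: the paper integrates in $s$ first to get the bound $(pA/\pi)^{d/2}\int e^{-pA\abs v^2}\norm{f(\cdot+v)-f}_p^p\,dv\to0$, which holds for every $f\in L_p(\R^d)$ at once and avoids your Lipschitz-plus-tail splitting.
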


\begin{proof}
First we record a uniform continuity estimate. For fixed \(A>0\),
the map
\[
 f\longmapsto
 \left(\frac{pA}{\pi}\right)^{\frac{d}{2p}}
 \Ut\bigl(fe^{-A\abs{\,\cdot-s\,}^2}\bigr)
\]
is a contraction from \(L_p(\R^d)\) into
\(L_p(\R^d;\LpQ q)\). Indeed, non-sharp Hausdorff-Young inequality gives
\begin{align*}
 \mathcal L_A(f)&= \left(\frac{pA}{\pi}\right)^{\frac{d}{2}}\int_{\R^d}
  \norm{\Ut\bigl(f(\,\cdot\,) e^{-A\abs{\,\cdot-s\,}^2}\bigr)}_{\LpQ q}^{p}
 \,ds\\
 &\le
 \left(\frac{pA}{\pi}\right)^{\frac{d}{2}}\int_{\R^d}\int_{\R^d}
 \abs{f(\xi)}^pe^{-pA\abs{\xi-s}^2}\,d\xi\,ds\\
 &=\norm f_p^p.
\end{align*}
Consider the map
\begin{align*}
 T_A: L_p(\R^d)&\longrightarrow L_p(\R^d;L_q(\R_\theta^d))\\
                f(\xi)    \,\,\,&\longmapsto F(s).
\end{align*}
where $F(s)=\Ut\bigl(f(\cdot) e^{-A\abs{\,\cdot-s\,}^2}\bigr)$. In this sense, $\mathcal L_A(f)^{1/p}$ is actually the norm of $T_A(f)$  in $L_p(\R^d;\LpQ{q})$. Hence $T_A$ is a contraction from $L_p(\R^d)$ to $L_p(\R^d;\LpQ{q})$.  This yields, uniformly in \(A\),
\begin{equation}\label{eq:Lipschitz}
  \abs{\mathcal L_A(f)^{1/p}-\mathcal L_A(h)^{1/p}}\leq \mathcal L_A(f-h)^{1/p}
 \le\norm{f-h}_p.
\end{equation}

We prove the monotonicity first. Take nonzero \(f\in C_c^\infty(\R^d)\).
By the construction of $f_{a,b}$, we have
\[
 e^{-A\abs{s}^2}f_{A,2A s}(\xi)
 =f(\xi)e^{-A\abs{\xi-s}^2}.
\]
Therefore,
\[
\left\|
U_\theta\bigl(f(\xi)e^{-A\abs{\xi-s}^2}\bigr)
\right\|_q^p
=
e^{-pA|s|^2}V(A,2As).
\]
Substituting this identity into the definition of
\(\mathcal L_A(f)\), we obtain
\[
\mathcal L_A(f)
=
\left(\frac{pA}{\pi}\right)^{\frac{d}{2}}
\int_{\mathbb R^d}
e^{-pA|s|^2}V(A,2As)\,ds.
\]
The change of variables \(x=2As\) gives
\begin{align}\label{eq:Lheat}
\mathcal L_A(f)=
\left(\frac{p}{4\pi A}\right)^{\frac{d}{2}}
\int_{\mathbb R^d}
e^{-p|x|^2/(4A)}V(A,x)\,dx=
H_{A/p}[V(A,\cdot)](0).
\end{align}
If \(0<A<B\), Proposition \ref{prop:comparison} yields
\[V(A,\cdot)\le H_{(B-A)/p}V(B,\cdot).\]
Positivity and the
semigroup identity for \(H_t\) imply
\[
 \mathcal L_A(f)=H_{A/p}[V(A,\cdot)](0)
 \le H_{A/p}H_{(B-A)/p}[V(B,\cdot)](0)
 =\mathcal L_B(f).
\]
Estimate \eqref{eq:Lipschitz}
and the density of \(C_c^\infty\) in \(L_p\) extend this inequality
to every \(f\in L_p(\R^d)\).

Next consider \(A\to 0^+\), again first with \(f\in C_c^\infty(\R^d)\). Rewrite \eqref{eq:Lheat} as
\[
 \mathcal L_A(f)=\left(\frac1\pi\right)^{\frac{d}{2}}\int_{\R^d}e^{-\abs x^2}
 V\left(A,2\sqrt{\frac{A}{p}}\,x\right)\,dx.
\]
The integrand converges pointwise to \(e^{-\abs x^2}V(0,0)\).
For \(0<A\le1\) and \(\supp f\subset B(0,R)\), we have
\[
 V\left(A,2\sqrt{\frac{A}{p}}\,x\right)\leq \int_{\R^d}|f(\xi)|^p e^{-pA|\xi|^2+2\sqrt{pA}x\cdot \xi}\,d\xi
\leq \norm f_p^p e^{2R\sqrt{p}|x|}.
\]
which the right-hand side, if multiplied by $e^{-\abs x^2}$, is an integrable function. Dominated convergence gives
$$\lim_{A\to 0^+} \mathcal L_A(f) = \left(\frac1\pi\right)^{\frac{d}{2}}V(0,0)\cdot\int_{\R^d}e^{-\abs x^2}\,dx=V(0,0)=\|\Ut(f)\|_q^p.$$
Density argument passes the result to all $f\in L_p(\R^d)$.

The limit as \(A\to\infty\) can be proved directly for every
\(f\in L_p(\R^d)\). Note that
\[
 R_A(s)=\left(\frac{pA}{\pi}\right)^{\frac{d}{2p}}
 \norm{\Ut(fe^{-A\abs{\,\cdot\,-s}^2})}_q,
 \qquad
 d_A=\left(\frac{pA}{\pi}\right)^{\frac{d}{2p}}\norm{\Ut (e^{-A\abs{\,\cdot\,-s}^2})}_q.
\]
Recall that
$$ \Ut (f(\cdot -s))=\Ut(s/2)\Ut(f)\Ut(s/2). $$
Together with Proposition~\ref{lem:gaussian},
\[\lim_{A\to \infty}d_A=\lim_{A\to \infty}\left(\frac{pA}{\pi}\right)^{\frac{d}{2p}}\norm{\Ut(e^{-A\abs{\,\cdot\,-s}^2})}_q\longrightarrow C_p^d.\]
The reverse triangle inequality thus gives,
for almost every \(s\),
\[
 \Big|R_A(s)-d_A\abs{f(s)}\Big|^p\leq\left(\frac{pA}{\pi}\right)^{\frac{d}{2}}\|\Ut(fe^{-A\abs{\,\cdot\,-s}^2})-f(s)\Ut(e^{-A\abs{\,\cdot\,-s}^2})\|_q^p.\]
Then by the non-sharp Hausdorff-Young inequality, we have
\[
 \Big|R_A(s)-d_A\abs{f(s)}\Big|^p\le\left(\frac{pA}{\pi}\right)^{\frac{d}{2}}\int_{\R^d}
 \abs{f(\xi)-f(s)}^p e^{-pA\abs{\xi-s}^2}\,d\xi.
\]
Integration in \(s\) with $v=\xi-s$ gives,
\begin{equation}\label{eq:approximation}
 \Big\|R_A-d_A\abs f \Big\|_p^p
 \le\left(\frac{pA}{\pi}\right)^{\frac{d}{2}}\int_{\R^d}e^{-pA\abs v^2}
 \norm{f(\cdot+v)-f}_p^p\,dv\longrightarrow0.
\end{equation}
Consequently
\(R_A\to C_p^d\abs f\) in \(L^p\). Since
\(\norm{R_A}_p^p=\mathcal L_A(f)\), this proves the second limit.
\end{proof}

\begin{theorem}\label{main-HY}
For every \(f\in L_p(\R^d)\) and every \(x\in L_p(\R_\theta^d) \),
\[
 \norm{\Ut (f)}_{L_q(\R_\theta^d)}\le C_p^d\norm f_{L_p(\R^d)},
 \qquad
 \norm{\Ft (x)}_{L_q(\R^d)}\le C_p^d\norm x_{L_p(\R_\theta^d)}.
\]
The constant \(C_p^d\) is sharp in both inequalities.
\end{theorem}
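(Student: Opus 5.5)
The first inequality is read off from Theorem~\ref{thm:monotonicity}. For \(f\in L_p(\R^d)\), \(\mathcal L_A(f)\) is nondecreasing in \(A>0\). Hence
\[
 \norm{\Ut(f)}_q^p=\lim_{A\to0^+}\mathcal L_A(f)\le\lim_{A\to\infty}\mathcal L_A(f)=C_p^{dp}\norm f_p^p .
\]
Taking \(p\)-th roots gives \(\norm{\Ut(f)}_q\le C_p^d\norm f_p\). For \(p=1\) (so \(q=\infty\)) the constant is \(C_1^d=1\), which is just the trivial bound \(\norm{\Ut(f)}_\infty\le\norm f_1\). I would check that Theorem~\ref{thm:monotonicity} is not needed in that case, since the \(L_\infty\)-norm does not fit the \(\Omega\) formalism.

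The second inequality follows by duality. For \(x\in L_p(\R_\theta^d)\) with \(1<p\le 2\), pair \(\Ft(x)\) against \(g\in\Sch(\R^d)\). By the definition \(\Ft(x)(\xi)=\taut(x\Ut(\xi)^*)\), Fubini, and normality of the trace (first for \(x\in L_1\cap L_p\), then extended by density and the non-sharp bound), we get
\[
\int_{\R^d}\Ft(x)(\xi)\,\overline{g(\xi)}\,d\xi=\taut\bigl(x\,\Ut(g)^*\bigr).
\]
Hölder in \(L_p(\R_\theta^d)\) together with the first inequality, applied to \(g\in L_p\), bounds the right-hand side by
\[
\norm x_p\,\norm{\Ut(g)}_q\le C_p^d\norm x_p\norm g_p .
\]
Taking the supremum over \(\norm g_p\le1\) gives \(\norm{\Ft(x)}_q\le C_p^d\norm x_p\). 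The case \(p=1\) is trivial, since \(|\taut(x\Ut(\xi)^*)|\le\norm x_1\) and \(C_1=1\). The only point needing care here is justifying the pairing identity, which is routine.

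For sharpness of the first inequality, Proposition~\ref{lem:gaussian} gives directly
\[
\lim_{A\to\infty}\frac{\norm{\Ut(g_A)}_q}{\norm{g_A}_p}=C_p^d .
\]
So no constant smaller than \(C_p^d\) works.

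For sharpness of the second inequality, I would use duality of operator norms. \(\Ft\) is, up to conjugation, the Banach adjoint of \(\Ut:L_p(\R^d)\to L_q(\R_\theta^d)\) with respect to the pairings above. Hence \(\norm{\Ft}_{L_p\to L_q}=\norm{\Ut}_{L_p\to L_q}\) whenever both are finite. A concrete extremizing sequence is \(x_A=\Ut(g_A)\). I would compute \(\Ft(x_A)=g_A\) from \(\taut(\Ut(f)\Ut(\xi)^*)=f(\xi)\) and compare \(\norm{g_A}_q/\norm{\Ut(g_A)}_p\) using the exact Laguerre formula from Proposition~\ref{lem:gaussian}. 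That formula gives the asymptotics of \(\norm{\Ut(g_A)}_r\) for every \(r\), and the ratio should tend to \(C_p^d\) (equivalently \(C_q^{-d}\)-type identities). I expect the adjoint argument to be cleanest, and this asymptotic matching of constants to be the only mildly delicate step.
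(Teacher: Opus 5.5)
Your proposal is correct and follows the paper's proof. The first inequality comes from the monotonicity and endpoint limits of $\mathcal L_A$ in Theorem~\ref{thm:monotonicity}. The second comes from the pairing $\int_{\R^d}\Ft(x)\overline{g}\,d\xi=\taut(x\Ut(g)^*)$ with H\"older and the first inequality, and sharpness comes from the Gaussians of Proposition~\ref{lem:gaussian}; for the second inequality the paper uses exactly your backup sequence $x_t=\Ut(e^{-t\abs{\cdot}^2})$.

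Your adjoint identity $\norm{\Ft}_{L_p\to L_q}=\norm{\Ut}_{L_p\to L_q}$ transfers sharpness more cleanly, since it avoids recomputing the Gaussian norms at exponent $p$. Your separate handling of $p=1$ is a sensible refinement, because the $\Phi$/$\Omega$ calculus behind Theorem~\ref{thm:monotonicity} is not available at $q=\infty$.
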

\begin{proof}
The first inequality follows from monotonicity and the two limits
in Theorem \ref{thm:monotonicity}. The Gaussians of Proposition~\ref{lem:gaussian} asymptotically attain \(C_p^d\), so this
constant is sharp.

For the second inequality, first take
\(x\in L_1(\mathbb R_\theta^d)\cap L_p(\mathbb R_\theta^d)\).
For every
\(f\in L_1(\mathbb R^d)\cap L_p(\mathbb R^d)\),
the basic properties of quantum Euclidean space yield
\[
\int_{\mathbb R^d}
\mathcal F_\theta(x)(\xi)\overline{f(\xi)}\,d\xi
=
\tau_\theta\bigl(xU_\theta(f)^*\bigr).
\]
By noncommutative H\"older's inequality and the first sharp inequality
we proved, one obtains
\[
\left|
\tau_\theta\bigl(xU_\theta(f)^*\bigr)
\right|
\le
\|x\|_{L_p(\mathbb R_\theta^d)}
\|U_\theta(f)\|_{L_q(\mathbb R_\theta^d)}
\le
C_p^d
\|x\|_{L_p(\mathbb R_\theta^d)}
\|f\|_{L_p(\mathbb R^d)}.
\]
Thus the scalar \(L_p\)-duality implies that
\(\mathcal F_\theta(x)\in L_q(\mathbb R^d)\).
Using this duality and the density of
\(L_1(\mathbb R^d)\cap L_p(\mathbb R^d)\) in
\(L_p(\mathbb R^d)\), we obtain
\[
\begin{aligned}
\|\mathcal F_\theta(x)\|_{L_q(\mathbb R^d)}
&=
\sup_{\substack{
f\in L_1\cap L_p\\
\|f\|_{p}=1
}}
\left|
\int_{\mathbb R^d}
\mathcal F_\theta(x)(\xi)\overline{f(\xi)}\,d\xi
\right|\\
&=
\sup_{\substack{
f\in L_1\cap L_p\\
\|f\|_{p}=1
}}
\left|
\tau_\theta\bigl(xU_\theta(f)^*\bigr)
\right|\\
&\le
C_p^d
\sup_{\substack{
y\in L_q(\mathbb R_\theta^d)\\
\|y\|_{q}=1
}}
\left|\tau_\theta(xy^*)\right|\\
&=
C_p^d\|x\|_{L_p(\mathbb R_\theta^d)}.
\end{aligned}
\]
Finally, the density of
\(L_1(\mathbb R_\theta^d)\cap L_p(\mathbb R_\theta^d)\)
in \(L_p(\mathbb R_\theta^d)\) yields the second
inequality for every \(x\in L_p(\mathbb R_\theta^d)\). Take $x_t=\Ut(e^{-t|\cdot|^2})$. Following the computation in Lemma
\ref{lem:gaussian}, we obtain
$$ \lim_{t\to \infty}\frac{\|e^{-t|\cdot|^2}\|_q}{\|x_t\|_p}=C_p^d.$$
This proves the sharpness for the second inequality.
\end{proof}

\begin{remark}
When $\theta=\mathbf{0}$, our flow admits a time--frequency interpretation
in terms of mixed norms of the Gabor transform. Let
$g_A(t)=e^{-A|t|^2}$ and define the Gabor transform
\[
V_{g_A}f(s,\omega)
=\int_{\mathbb R^d}f(t)g_A(t-s)e^{- i t\cdot\omega}\,dt.
\]
With this Fourier convention, the commutative specialization of our flow becomes
\[
\mathcal L_A(f)^{1/p}
=\frac{\|V_{g_A}f\|_{L_p(ds;L_q(d\omega))}}{\|g_A\|_p}
=\left(\frac{pA}{\pi}\right)^{\frac{d}{2p}}
\left[
\int_{\mathbb R^d}
\left(\int_{\mathbb R^d}|V_{g_A}f(s,\omega)|^q\,d\omega\right)^{p/q}
ds
\right]^{1/p}.
\]
Thus the flow is a normalized mixed norm, with the frequency
$L_q$ norm taken before the time-domain $L_p$ norm.
As $A\to 0^+$, the Gaussian window spreads in time and
concentrates in frequency; as $A\to\infty$, it conversely concentrates in
time and spreads in frequency. The endpoint limits make this transition precise:
\[
\lim_{A\to 0^+}\mathcal L_A(f)^{1/p}
=\|\mathcal F(f)\|_q,
\qquad
\lim_{A\to\infty}\mathcal L_A(f)^{1/p}
=C_p^d\|f\|_p.
\]
Consequently, the monotonicity of such flow connects the frequency
norm of $f$ to its time-domain norm and directly yields Beckner's
sharp Hausdorff--Young inequality.
\end{remark}

\section{On the convolution inequalities}\label{sec-young}
Just as Beckner observed in \cite{Beckner1975}, the sharp Hausdorff--Young inequality directly implies a partial result toward the sharp Young's convolution inequality. We record the analogous non-commutative result in this section.

\subsection{Noncommutative convolution}
The convolution on quantum Euclidean spaces that we are going to define below is much subtler than the usual convolution in the classical Euclidean spaces, which is simply viewed as the inverse Fourier transform of the product of the Fourier transforms of two functions.

Generally, given a locally compact group $G$, the convolution of operators on the group von Neumann $VN(G)$ can be defined on its predual $A(G)$ as the  product of functions in $A(G)$. We refer the reader to \cite{EYM1964} for the definition of Fourier algebras $A(G)$. In order to give an appropriate definition of convolutions on $\R_\theta^d$, it is convenient for us to view $\R_\theta^d$ as the twisted group von Neumann algebra $VN_\theta(\R^d)$, and identify the predual $L^1(\R_\theta^d)$ with the twisted Fourier spaces $A_\theta(\R^d)$ (see \cite{LX2019}).
 
 As we have already seen, the map  $\Ut:  \R^d \to \mathcal{B}(L_2(\R^d))$ is a left regular twisted representation determined by the 2-cocycle 
 $$\sigma_\theta (s,  t)  =e^{\frac{i}{2}\langle s,\theta t\rangle} .$$
 Then one easily identifies $\R_\theta^d =  VN_\theta(\R^d)$. Further, the twisted Fourier space $A_\theta(\R^d)$ is defined to be the predual $ VN_\theta(\R^d)_*$, i.e. functionals $\varphi \in A_\theta(\R^d)$ are normal functionals on $ VN_\theta(\R^d)$ having the form
\[
 	 \varphi = (\sum_{n\ge 1} \omega_{\xi_n, \eta_n})|_{VN_\theta(\R^d)}
\]
  where $\omega_{\xi, \eta}$ refers to the normal functional on $\mathcal{B}(L_2(\R^d))$ given by
 $$\omega_{\xi, \eta}(T) = \langle T\xi, \eta \rangle,\;\; T\in  \mathcal{B}(L_2(\R^d)).$$	
 The norm of $\varphi \in A_\theta(\R^d)$ is
  	$$\|\varphi\|_{A_\theta(\R^d)} = \inf \left\{\sum_{n\ge 1} \|\xi_n\|_2 \cdot \|\eta_n\|_2 : \varphi = (\sum_{n\ge 1} \omega_{\xi_n, \eta_n})|_{VN_\theta(\R^d)}\right\}.$$
  		We can identify $A_\theta(\R^d)$ as a subspace of $C_0(\R^d)$ as follows.
  	\begin{equation}\label{identifyAG}
  		\begin{split}
  	A_\theta(\R^d)
  		 = \{ \varphi \in C_0(\R^d) : \;\; & \varphi(\cdot) =  \sum_{n\ge 1} \langle \Ut (\cdot)\xi_n, \eta_n \rangle,\\
  		&  \xi_n, \eta_n \in L_2(\R^d),\; \sum_{n\ge 1} \|\xi_n\|_2 \cdot \|\eta_n\|_2 <\infty\}
  		\end{split}
  	\end{equation}
  	with the norm
  	$$\|\varphi\|_{A_\theta(\R^d)} = \inf \left\{\sum_{n\ge 1} \|\xi_n\|_2 \cdot \|\eta_n\|_2 \right\},$$
  	where the infimum is taken over all possible such choices $\xi_n, \eta_n \in L_2(\R^d)$, $n \ge 1$.
  With this identification the duality $(A_\theta(\R^d), VN_\theta(\R^d))$ becomes
  	$$\langle \Ut (\cdot), \varphi \rangle = \varphi(\cdot),\;\; \varphi\in A_\theta(\R^d),$$
   or equivalently
  	\begin{align}\label{eq-duality-twisted-Fourier}
  		\langle  \Ut (f), \varphi \rangle = \int_{\R^d} f(s) \varphi(s)ds,\;\; f\in L_1(\R^d),\; \varphi\in A_\theta(\R^d).  
  	\end{align}

 We are now able to define convolution of $\varphi \in A_{\theta_1 }(\R^d)$ with $\psi \in A_{ \theta_2}(\R^d)$ as an element of $A_{\theta_1+\theta_2}(\R^d)$. Let $\Gamma^{\theta_1\theta_2}  $ be the $*$-homomorphism (equivalently, a $*$-
isomorphism onto its range)
 \begin{align*}
 	\Gamma^{\theta_1\theta_2}: VN_{\theta_1+\theta_2}(\R^d) &\longrightarrow VN_{\theta_1}(\R^d ) \overline{\otimes} VN_{\theta_2}(\R^d),\\
     U_{\theta_1+ \theta_2}(s) \,\,\,\,\,&\longmapsto\,\,\,\,\,\, U_{\theta_1}(s) \otimes U_{\theta_2}(s).
 \end{align*}
 We then define $\varphi \star\psi \in  A_{\theta_1+\theta_2}(\R^d)$ to be the functional on $VN_{\theta_1+\theta_2}(\R^d)$ determined by 
 \[
 \langle x,  \varphi \star \psi  \rangle   =    \langle \Gamma^{\theta_1\theta_2}(x)  ,  \varphi \otimes \psi\rangle,\quad \forall \;x\in  VN_{\theta_1+\theta_2}(\R^d) .
 \]
 This definition immediately yields:

 \begin{proposition}\label{Young-AG}
 Let $\varphi \in A_{\theta_1 }(\R^d)$ and $\psi \in A_{ \theta_2}(\R^d)$. We have
 $$\| \varphi \star\psi \|_{A_{\theta_1 +\theta_2 }(\R^d)} \leq     \|\varphi\|_{A_{\theta_1 }(\R^d)}  \|\psi\|_{A_{\theta_2 }(\R^d)}.$$
 \end{proposition}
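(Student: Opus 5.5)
The plan is to obtain the estimate by duality. The convolution $\varphi\star\psi$ is defined as the functional $\varphi\otimes\psi$ composed with the normal $*$-homomorphism $\Gamma^{\theta_1\theta_2}$. So it suffices to control the norm of $\varphi\otimes\psi$ in the predual of the tensor product, and to check that composing with $\Gamma^{\theta_1\theta_2}$ does not increase norms.

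First, I would check that $\Gamma^{\theta_1\theta_2}$ is well defined and normal. Since $\sigma_{\theta_1}\sigma_{\theta_2}=\sigma_{\theta_1+\theta_2}$, the map
\[
s\longmapsto U_{\theta_1}(s)\otimes U_{\theta_2}(s)
\]
is a $\sigma_{\theta_1+\theta_2}$-projective representation of $\R^d$. By Fell absorption for twisted regular representations, it is unitarily equivalent to an amplification of $U_{\theta_1+\theta_2}$. Concretely, there is a unitary $W$ on $L_2(\R^d)\otimes L_2(\R^d)$ with
\[
W\bigl(U_{\theta_1}(s)\otimes U_{\theta_2}(s)\bigr)W^*=U_{\theta_1+\theta_2}(s)\otimes \pi(s)
\]
for a suitable projective representation $\pi$, one being the regular representation and the other its cocycle-adjusted twist. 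Hence $\Gamma^{\theta_1\theta_2}$ extends to a normal injective $*$-homomorphism. This step is the main obstacle, and I would handle it by writing $W\xi(s,t)=\xi(s,t-s)$ times an explicit phase.

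Second, given this, $\langle x,\varphi\star\psi\rangle=\langle\Gamma^{\theta_1\theta_2}(x),\varphi\otimes\psi\rangle$ defines a normal functional on $VN_{\theta_1+\theta_2}(\R^d)$, since normal functionals compose with normal homomorphisms to give normal functionals. Its norm satisfies
\[
\|\varphi\star\psi\|\le\|\Gamma^{\theta_1\theta_2}\|\,\|\varphi\otimes\psi\|_{(VN_{\theta_1}\overline\otimes VN_{\theta_2})_*}\le\|\varphi\otimes\psi\|,
\]
because a $*$-homomorphism is contractive.

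Third, I would bound $\|\varphi\otimes\psi\|$ directly from the representation norm. Take $\varphi=\sum_n\omega_{\xi_n,\eta_n}$ and $\psi=\sum_m\omega_{\xi'_m,\eta'_m}$. Then $\varphi\otimes\psi$ is the restriction of
\[
\sum_{n,m}\omega_{\xi_n\otimes\xi'_m,\,\eta_n\otimes\eta'_m}.
\]
Its cost is $\sum_{n,m}\|\xi_n\|\|\eta_n\|\|\xi'_m\|\|\eta'_m\|$, which factors as the product of the two sums. Composing with $\Gamma^{\theta_1\theta_2}$, the functional $\varphi\star\psi$ is the restriction to $VN_{\theta_1+\theta_2}$ of
\[
\sum_{n,m}\omega_{W^*(\dots)}
\]
after pulling back through $W$ and the amplification. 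More simply, since $\Gamma^{\theta_1\theta_2}$ is spatially implemented as an amplification after conjugating by $W$, it is a vector functional sum $\sum\omega_{\zeta_k,\chi_k}$ on $L_2\otimes L_2=L_2(\R^d;L_2)$. That in turn yields a representation in terms of $L_2(\R^d)$ vectors with the same cost, using a standard identification of an amplified predual.

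Finally, taking the infimum over the representations of $\varphi$ and $\psi$ gives the asserted inequality. As a check, in function form one also expects $(\varphi\star\psi)(s)=\varphi(s)\psi(s)$, the pointwise product, which is consistent with the classical Fourier algebra case $\theta_1=\theta_2=0$.
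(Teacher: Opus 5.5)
Your proof is correct and is essentially the argument the paper has in mind when it says the proposition follows immediately from the definition: $\Gamma^{\theta_1\theta_2}$ is a $*$-homomorphism, hence contractive, and $\|\varphi\otimes\psi\|\le\|\varphi\|\,\|\psi\|$ in the predual of $VN_{\theta_1}(\R^d)\overline{\otimes}VN_{\theta_2}(\R^d)$, which is your Step 2 together with the first half of Step 3. Your Step 1 (the spatial implementation, which the paper takes for granted when it defines $\Gamma^{\theta_1\theta_2}$) and your explicit vector-functional representation (which controls the infimum-defined norm directly) are harmless extra detail, with one correction: the bare shear $\xi(s,t)\mapsto\xi(s,s+t)$ gives $U_{\theta_1+\theta_2}(r)\otimes\pi(r)$ with $\pi$ an ordinary (not projective) representation by multiplication with characters, and it is the phase in $(W\xi)(s,t)=e^{-\frac i2\langle\theta_2 t,s\rangle}\xi(s,s+t)$ that yields $W\bigl(U_{\theta_1}(r)\otimes U_{\theta_2}(r)\bigr)W^*=U_{\theta_1+\theta_2}(r)\otimes 1$, the amplification you use in Step 3.
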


 Taking $x   =   U_{\theta_1 +\theta_2 }   (f) \in   VN_{\theta_1+\theta_2}(\R^d)  $, we compute 
 $$  \langle \Gamma^{\theta_1\theta_2}(x)  ,  \varphi \otimes \psi\rangle =   \int_{\R^d} f(s) \varphi(s) \psi(s) ds.$$
 This indicates that, viewing $A_{\theta_1 +\theta_2 }  (\R^d)$ as a subspace of $C_0(\R^d)$, the convolution $ \varphi \star\psi$ is identified as the function $\varphi(\cdot) \psi(\cdot) $ as pointwise multiplication.

By duality between $L_1(\R_\theta^d)$ and $L_\infty(\R_\theta^d)=\R_\theta^d$, we may further identify  $\varphi \in A_{\theta }(\R^d)$ with $\Ut(\varphi (-\,\cdot\,))\in L_1(\R_\theta^d)$. Indeed, taking $x   =   \Ut(f) \in   VN_{\theta }(\R^d) = \R_\theta^d $, we have
$$\langle  x , \Ut(\varphi (-\,\cdot\,)) \rangle = \tau_\theta( x   \Ut(\varphi (-\,\cdot\,)) )  = \tau_\theta( \Ut( f*_\theta \varphi (-\,\cdot\,)  ))  =   \int_{\R^d} f(s) \varphi(s) ds$$
which coincides with \eqref{eq-duality-twisted-Fourier}.

From now on, let us define the convolution ``$\star $'' between elements in $L_1(\R_\theta^d)$, or the dense subspace $\mathcal{S}(\R_\theta^d)$, to better analyze the $L_p$ norms of convolutions. Given $x  = U_{\theta_1}(f) \in \mathcal{S}(\mathbb{R}_{\theta_1}^d)$ and $y  = U_{\theta_2}(g) \in \mathcal{S}(\mathbb{R}_{\theta_2}^d)$, define 
 \[
 x\star y  =  U_{\theta_1+\theta_2}(fg) \in \mathcal{S}(\mathbb{R}_{\theta_1+\theta_2}^d).
 \]
Then Proposition \ref{Young-AG} yields
\begin{align}\label{end-1}
\|x\star y \|_1 \leq  \|x\|_1 \|y\|_1.
\end{align}
We next establish non-sharp Young's inequality in the full range of exponents.
Let $\mathcal C$ denote complex conjugation on $L_2(\mathbb R^d)$.
Since
\[
\mathcal C U_\theta(\xi)\mathcal C=U_{-\theta}(\xi),
\]
the map
\(
\mathcal R_\theta(x)=\mathcal Cx^*\mathcal C
\)
defines a trace-preserving $*$-anti-isomorphism from
$\mathbb R_\theta^d$ onto $\mathbb R_{-\theta}^d$. In particular,
\[
\mathcal R_\theta(U_\theta(f))
=U_{-\theta}(f(-\,\cdot\,)),
\]
and hence, for every $1\le p\le\infty$,
\begin{equation}\label{eq:reflection-isometry}
\|U_{-\theta}(f(-\,\cdot\,))\|_p
=\|U_\theta(f)\|_p.
\end{equation}

For $x=U_{\theta_1}(f)$, $y=U_{\theta_2}(g)$ and
$z=U_{\theta_1+\theta_2}(h)$ in the corresponding Schwartz spaces, we have
\begin{equation}\label{eq:convolution-duality}
\begin{aligned}
\tau_{\theta_1+\theta_2}((x\star y)z)
=\int_{\mathbb R^d}f(s)g(s)h(-s)\,ds=\tau_{\theta_1}
  \bigl(x(\mathcal R_{\theta_2}(y)\star z)\bigr)=\tau_{\theta_2}
  \bigl(y(\mathcal R_{\theta_1}(x)\star z)\bigr).
\end{aligned}
\end{equation}
Here $\mathcal R_{\theta_2}(y)\star z$ belongs to
$\mathcal S(\mathbb R_{\theta_1}^d)$, and
$\mathcal R_{\theta_1}(x)\star z$ belongs to
$\mathcal S(\mathbb R_{\theta_2}^d)$.

\begin{proposition}\label{Young-1p}
Let $x\in\mathcal S(\mathbb R_{\theta_1}^d)$ and
$y\in\mathcal S(\mathbb R_{\theta_2}^d)$.
For every $1\le p\le\infty$,
\[
\|x\star y\|_p\le\|x\|_1\|y\|_p.
\]
\end{proposition}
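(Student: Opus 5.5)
Fix $x\in\mathcal S(\mathbb R_{\theta_1}^d)$ and regard $y\mapsto x\star y$ as a linear map $T_x$ from $\mathcal S(\mathbb R_{\theta_2}^d)$ into $\mathcal S(\mathbb R_{\theta_1+\theta_2}^d)$. The plan is to prove $\|T_x y\|_p\le\|x\|_1\|y\|_p$ at the endpoints $p=1$ and $p=\infty$ and then interpolate. The case $p=1$ is exactly \eqref{end-1}.

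For $p=\infty$ I would use the duality identity \eqref{eq:convolution-duality}. For $z\in\mathcal S(\mathbb R_{\theta_1+\theta_2}^d)$,
\[
\tau_{\theta_1+\theta_2}((x\star y)z)=\tau_{\theta_2}\bigl(y(\mathcal R_{\theta_1}(x)\star z)\bigr).
\]
By Hölder, the right-hand side is bounded by $\|y\|_\infty\|\mathcal R_{\theta_1}(x)\star z\|_1$. Applying \eqref{end-1} again, with $\theta_1$ replaced by $-\theta_1$ and $\theta_2$ by $\theta_1+\theta_2$, gives $\|\mathcal R_{\theta_1}(x)\star z\|_1\le\|\mathcal R_{\theta_1}(x)\|_1\|z\|_1=\|x\|_1\|z\|_1$, since $\mathcal R_{\theta_1}$ is trace preserving (see \eqref{eq:reflection-isometry}). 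We need $\mathcal R_{\theta_1}(x)\in\mathcal S(\mathbb R_{-\theta_1}^d)$, which holds because $\mathcal R_{\theta_1}(U_{\theta_1}(f))=U_{-\theta_1}(f(-\cdot))$. Because $\mathcal S$ is dense in $L_1$ and $L_\infty=(L_1)^*$, taking the supremum over $\|z\|_1\le1$ yields $\|x\star y\|_\infty\le\|x\|_1\|y\|_\infty$. The same duality argument, with $z\in L_{p'}$, would in fact give every $p$ directly from the $L_1$ bound in the conjugate exponent. So the clean route is: establish $p=1$ and $p=\infty$, then interpolate.

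For $1<p<\infty$, apply complex (Riesz–Thorin type) interpolation for noncommutative $L_p$ spaces, for instance via the three-lines lemma on the dense class $\mathcal S$. An alternative is the dual form used above: $|\tau((x\star y)z)|\le\|y\|_p\|\mathcal R_{\theta_1}(x)\star z\|_{p'}$. Either way the bound is reduced to the interpolated operator norm $\le\|x\|_1$.

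The main obstacle is technical rather than conceptual. One has to make sure $T_x$ extends boundedly at the endpoints, with $L_\infty$ not separable and $\mathcal S$ not norm-dense in $L_\infty$. This is handled by interpolating between $L_1$ and $L_\infty$ using the weak$^*$ formulation, or by interpolating the dual map $z\mapsto\mathcal R_{\theta_1}(x)\star z$ between $L_1$ and $L_1$ bounds in the appropriate sense. After that, one checks that the three-lines argument runs on the Schwartz-type dense class, e.g. through the spectral decomposition $y=u|y|$ and $y_w=u|y|^{pw}$ in the standard Riesz–Thorin proof.
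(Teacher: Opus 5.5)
Your argument is correct but organized differently from the paper's. Both proofs use the same two facts: the $L_1$ bound \eqref{end-1}, and the duality \eqref{eq:convolution-duality} combined with the trace-preserving reflection $\mathcal R_{\theta_1}$. Your $p=\infty$ step is exactly the paper's duality step specialized to $p=\infty$, $p'=1$.

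The difference lies in the order and in the second endpoint. The paper adds the endpoint $p=2$ from the symbol side, via $\|x\star y\|_2=\|fg\|_2\le\|f\|_\infty\|g\|_2$ with $\|f\|_\infty\le\|x\|_1$. It then interpolates only on $[1,2]$ and dualizes to reach $(2,\infty]$. You instead dualize first, $1\to\infty$, and interpolate over all of $[1,\infty]$.

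Your arrangement never uses Plancherel or the symbols $f,g$. The proposition becomes a formal consequence of \eqref{end-1} and the $\mathcal R$-symmetry, in the spirit of a Schur test.

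The paper's arrangement gives a cleaner interpolation. $\mathcal S(\mathbb R^d_{\theta_2})$ is norm dense in both $L_1$ and $L_2$, so $y\mapsto x\star y$ extends by continuity to compatible bounded maps, and standard complex interpolation applies as is. The final duality step then only needs density of $\mathcal S$ in $L_{p'}$ for $p'\in[1,2)$.

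In your version, the $L_\infty$ endpoint is exactly where norm density fails. So the step you mention is genuinely required, not cosmetic: you must define the $L_\infty$ extension as the Banach adjoint of $z\mapsto\mathcal R_{\theta_1}(x)\star z$ on $L_1$, and check that it agrees with the $L_1$ extension on $L_1\cap L_\infty$.

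One correction to an aside. Testing against $z\in L_{p'}$ requires the bound $\|\mathcal R_{\theta_1}(x)\star z\|_{p'}\le\|x\|_1\|z\|_{p'}$, which is the statement at the conjugate exponent, not merely the $L_1$ bound. So duality only converts $p'$ into $p$, which is exactly how the paper handles $p>2$. It cannot replace the interpolation step.
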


\begin{proof}
The case $p=1$ follows from \eqref{end-1}.
For $p=2$, write $x=U_{\theta_1}(f)$ and
$y=U_{\theta_2}(g)$. By Plancherel indentity and non-sharp Hausdorff--Young inequality, we obtain
\[
\|x\star y\|_2
=\|fg\|_{2}
\le\|f\|_{\infty}
   \|g\|_{2}
\le\|x\|_1\|y\|_2.
\]
For fixed $x$, interpolation therefore gives the assertion
for $1\le p\le2$.

Now let $2<p\le\infty$, so that $1\le p'<2$.
For $z\in\mathcal S(\mathbb R_{\theta_1+\theta_2}^d)$,
\eqref{eq:convolution-duality} gives 
and
the estimate already proved at exponent $p'$ give
\[
\bigl|\tau_{\theta_1+\theta_2}((x\star y)z)\bigr|
\le\|y\|_p
   \|\mathcal R_{\theta_1}(x)\star z\|_{p'}.\]
The estimate already proved at $p'$ then gives
\[
\bigl|\tau_{\theta_1+\theta_2}((x\star y)z)\bigr|
\le\|y\|_p
   \|\mathcal R_{\theta_1}(x)\|_1\|z\|_{p'}=\|x\|_1\|y\|_p\|z\|_{p'}.
\]
Taking the supremum over such $z$ with $\|z\|_{p'}\le1$
completes the proof.
\end{proof}

\begin{proposition}\label{Young-pp-infty}
Let $x\in\mathcal S(\mathbb R_{\theta_1}^d)$ and
$y\in\mathcal S(\mathbb R_{\theta_2}^d)$.
For every $1\le p\le\infty$, with conjugate exponent $p'$,
\[
\|x\star y\|_\infty\le\|x\|_p\|y\|_{p'}.
\]
\end{proposition}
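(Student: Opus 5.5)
The plan is to prove the estimate by duality with $L_1(\mathbb R_{\theta_1+\theta_2}^d)$, using the convolution duality \eqref{eq:convolution-duality} together with noncommutative H\"older's inequality and Proposition~\ref{Young-AG}. Write $x=U_{\theta_1}(f)$, $y=U_{\theta_2}(g)$ and take $z=U_{\theta_1+\theta_2}(h)\in\mathcal S(\mathbb R^d_{\theta_1+\theta_2})$. Then $\|x\star y\|_\infty=\sup|\tau_{\theta_1+\theta_2}((x\star y)z)|$, where the supremum runs over such $z$ with $\|z\|_1\le1$. This uses the density of $\mathcal S(\mathbb R_\theta^d)$ in $L_1(\mathbb R_\theta^d)$ and the fact that $L_\infty(\mathbb R_\theta^d)$ is the dual of $L_1(\mathbb R^d_\theta)$; note that $x\star y=U_{\theta_1+\theta_2}(fg)$ is bounded, since $fg\in L_1$.

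By \eqref{eq:convolution-duality} we have $\tau_{\theta_1+\theta_2}((x\star y)z)=\tau_{\theta_2}\bigl(y(\mathcal R_{\theta_1}(x)\star z)\bigr)$. H\"older's inequality bounds this by $\|y\|_{p'}\|\mathcal R_{\theta_1}(x)\star z\|_p$. Proposition~\ref{Young-1p} then applies with the roles reversed, since it holds for arbitrary pairs of matrices: the $L_1$ factor is $z\in\mathcal S(\mathbb R^d_{\theta_1+\theta_2})$ and the $L_p$ factor is $\mathcal R_{\theta_1}(x)\in\mathcal S(\mathbb R^d_{-\theta_1})$.

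This role reversal needs justification, because the convolution is defined as $U_{\alpha}(f)\star U_\beta(g)=U_{\alpha+\beta}(fg)$ and is therefore symmetric: $a\star b$ and $b\star a$ are the same element of $\mathcal S(\mathbb R^d_{\alpha+\beta})$. Proposition~\ref{Young-1p} gives $\|z\star w\|_p\le\|z\|_1\|w\|_p$ for any $z,w$, so
\[
\|\mathcal R_{\theta_1}(x)\star z\|_p=\|z\star\mathcal R_{\theta_1}(x)\|_p\le \|z\|_1\|\mathcal R_{\theta_1}(x)\|_p=\|z\|_1\|x\|_p,
\]
where the last equality holds because $\mathcal R_{\theta_1}$ is trace-preserving and $*$-anti-isomorphic, so it is isometric on every $L_p$; this is \eqref{eq:reflection-isometry}. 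Combining gives $|\tau((x\star y)z)|\le\|x\|_p\|y\|_{p'}\|z\|_1$, and taking the supremum concludes the proof.

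The only real obstacle is the bookkeeping. We must check that $\mathcal R_{\theta_1}(x)\star z$ lies in the right space, namely $\mathcal S(\mathbb R^d_{\theta_2})$, since $-\theta_1+\theta_1+\theta_2=\theta_2$. We must also check that the $L_1$–$L_\infty$ duality may be restricted to Schwartz test elements. For $p=\infty$ or $p=1$ the argument is the same, using Proposition~\ref{Young-1p} at the endpoint exponent.
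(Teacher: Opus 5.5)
Your proof is correct and matches the paper's argument. Both proofs test against Schwartz $z$ with $\|z\|_1\le 1$, use \eqref{eq:convolution-duality} and H\"older, then apply the symmetry of $\star$ with Proposition~\ref{Young-1p} ($z$ as the $L_1$ factor) and the reflection isometry \eqref{eq:reflection-isometry}. The only difference is cosmetic: you reflect $x$ and apply Proposition~\ref{Young-1p} at exponent $p$, whereas the paper reflects $y$ via $\tau_{\theta_1}\bigl(x(\mathcal R_{\theta_2}(y)\star z)\bigr)$ and applies it at exponent $p'$.
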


\begin{proof}
For $z\in\mathcal S(\mathbb R_{\theta_1+\theta_2}^d)$,
\eqref{eq:convolution-duality} gives
\[
\bigl|\tau_{\theta_1+\theta_2}((x\star y)z)\bigr|
\le\|x\|_p
   \|\mathcal R_{\theta_2}(y)\star z\|_{p'}.
\]
By symmetry of the convolution, Proposition~\ref{Young-1p}
and \eqref{eq:reflection-isometry},
\[
\|\mathcal R_{\theta_2}(y)\star z\|_{p'}
=\|z\star\mathcal R_{\theta_2}(y)\|_{p'}
\le\|z\|_1\|y\|_{p'}.
\]
Taking the supremum over such $z$ with $\|z\|_1\le1$
proves the assertion.
\end{proof}

First interpolating these estimates and establishing the results on Schwartz spaces, then extending the results to the corresponding noncommutative $L_p$-spaces, we get the following noncommutative Young inequality.

\begin{proposition}\label{Young-general}
Let $1\le p,q,r\le\infty$ satisfy
\[
\frac1p+\frac1q=1+\frac1r.
\]
Let $x\in L_p(\mathbb R_{\theta_1}^d)$ and $y\in L_q(\mathbb R_{\theta_2}^d)$. Then $x\star y\in L_r(\mathbb R_{\theta_1+\theta_2}^d)$ and 
\[
\|x\star y\|_r\le\|x\|_p\|y\|_q.
\]
\end{proposition}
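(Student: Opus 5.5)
The plan is to prove the general Young inequality by bilinear interpolation between the endpoint estimates already obtained, namely Proposition~\ref{Young-1p} and Proposition~\ref{Young-pp-infty}. We first establish it for $x\in\mathcal S(\mathbb R_{\theta_1}^d)$ and $y\in\mathcal S(\mathbb R_{\theta_2}^d)$, and then extend by density.

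\textbf{Step 1: the region of admissible exponents.} Parametrize by $(1/p,1/q)$. The constraint $1/p+1/q=1+1/r$ with $0\le 1/r\le 1$ describes the triangle with vertices $(1,0)$, $(0,1)$ and $(1,1)$, which correspond to $(p,q,r)=(1,\infty,\infty)$, $(\infty,1,\infty)$ and $(1,1,1)$. The following estimates are available:
\begin{itemize}
\item Proposition~\ref{Young-1p} gives $\|x\star y\|_q\le\|x\|_1\|y\|_q$, which covers the edge $1/p=1$.
\item Proposition~\ref{Young-pp-infty} gives $\|x\star y\|_\infty\le\|x\|_p\|y\|_{p'}$, which covers the edge $1/r=0$.
\item Symmetry gives the edge $1/q=1$. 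Here $x\star y=U_{\theta_1+\theta_2}(fg)$ is symmetric in $(f,g)$ up to exchanging $\theta_1,\theta_2$, so applying Proposition~\ref{Young-1p} with the roles swapped yields $\|x\star y\|_p\le\|x\|_p\|y\|_1$.
\end{itemize}
All of these estimates have constant $1$.

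\textbf{Step 2: interpolation.} Fix an interior point $(1/p,1/q,1/r)$ and write it as a convex combination of two points lying on the boundary edges. For instance, fix $x$ and regard $y\mapsto x\star y$ as a linear operator. By Proposition~\ref{Young-1p} and Proposition~\ref{Young-pp-infty}, this operator maps $L_1\to L_p$ and $L_{p'}\to L_\infty$, each with norm at most $\|x\|_p$; the first of these is the symmetric form of Proposition~\ref{Young-1p}. Riesz--Thorin interpolation for noncommutative $L_p$ spaces, which are the complex interpolation scale $[L_1,L_\infty]_\vartheta=L_{1/(1-\vartheta)}$ for the semifinite traces $\tau_{\theta_2}$ and $\tau_{\theta_1+\theta_2}$, then gives boundedness $L_q\to L_r$ with norm at most $\|x\|_p$. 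The exponents match because $1/q=(1-\vartheta)+\vartheta/p'$ and $1/r=(1-\vartheta)/p$, and these satisfy $1/p+1/q=1+1/r$. As $p$ ranges over $[1,\infty]$ and $\vartheta$ over $[0,1]$, this covers the whole triangle. This linear-in-$y$ interpolation avoids bilinear complex interpolation entirely.

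\textbf{Step 3: density.} The estimate on Schwartz elements shows that $\star$ extends uniquely to a bounded bilinear map $L_p(\mathbb R_{\theta_1}^d)\times L_q(\mathbb R_{\theta_2}^d)\to L_r(\mathbb R_{\theta_1+\theta_2}^d)$, provided $\mathcal S(\mathbb R_\theta^d)$ is dense in $L_p(\mathbb R_\theta^d)$. For $p=\infty$ density fails in norm; there we define the extension by duality through \eqref{eq:convolution-duality}. In the cases with $r=\infty$ and $p$ or $q$ equal to $\infty$, the other exponent is $1$, and we define $x\star y$ as the adjoint of the map $z\mapsto \mathcal R(\cdot)\star z$ on $L_1$.

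The main obstacle is this extension and well-definedness at the endpoint $\infty$, together with consistency with the $A_\theta$-definition of $\star$ on $L_1$. Density of $\mathcal S(\mathbb R_\theta^d)$ in $L_p$ for $p<\infty$ is standard from \cite{MSX2020}. For the interpolation step itself, we need the interpolated operator to agree with $\star$ on the intersection, which holds because both are defined on Schwartz elements.
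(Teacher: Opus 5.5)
Your proposal is correct and takes the same route as the paper, which interpolates Propositions~\ref{Young-1p} and~\ref{Young-pp-infty} on Schwartz elements and then extends to the $L_p$ spaces. Your version fills in what the paper leaves implicit: fixing $x$ reduces the argument to linear Riesz--Thorin between $L_1\to L_p$ (the symmetric form of Proposition~\ref{Young-1p}) and $L_{p'}\to L_\infty$, whose parametrization $1/r=(1-\vartheta)/p$ covers the whole triangle because $p\le r$, and the duality definition of $x\star y$ covers the cases $p=\infty$ or $q=\infty$, where norm density fails.
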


\subsection{Sharp Young's inequality}
With the sharp Hausdorff-Young inequality, we now give the sharp Young's inequality for $1\leq p,q\leq 2$.
\begin{theorem}\label{sharpyoung}
Let $ x\in L_p(\R_{\theta_1}^d)$ and $y \in  L_q(\R_{\theta_2}^d)$ with 
$$ \frac{1}{p}+\frac{1}{q}=\frac{1}{r}+1,\qquad 1\leq p,q\leq 2,\quad r\geq 2.$$ 
Then
$$ \|x\star y\|_r\leq \left(\frac{C_pC_q}{C_r}\right)^{d} \|x\|_p\|y\|_q.$$
\end{theorem}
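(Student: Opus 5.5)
We follow Beckner's classical argument: write the convolution as a product on the Fourier side, apply the sharp Hausdorff--Young inequality of Theorem~\ref{main-HY} in both directions, and use H\"older's inequality in between. By density (Proposition~\ref{Young-general} gives boundedness, so both sides are continuous), it suffices to take $x=U_{\theta_1}(f)$ and $y=U_{\theta_2}(g)$ in the Schwartz classes, so that $x\star y=U_{\theta_1+\theta_2}(fg)$.

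Write $r'$, $p'$, $q'$ for the conjugate exponents. Since $r\ge 2$, we have $1\le r'\le 2$. The first inequality of Theorem~\ref{main-HY} (on $\R^d_{\theta_1+\theta_2}$, whose constant does not depend on the matrix) gives
\[
\|x\star y\|_r=\|U_{\theta_1+\theta_2}(fg)\|_r\le C_{r'}^d\|fg\|_{r'}.
\]
The relation $1/p+1/q=1+1/r$ is equivalent to $1/r'=1/p'+1/q'$. H\"older's inequality therefore gives $\|fg\|_{r'}\le\|f\|_{p'}\|g\|_{q'}$.

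Next we bound $\|f\|_{p'}$ and $\|g\|_{q'}$. Since $1\le p\le 2$ and $f=\mathcal F_{\theta_1}(x)$ (from $\tau_\theta(U_\theta(f)U_\theta(\xi)^*)=f(\xi)$ via \eqref{eq:plancherelpair}), the second inequality of Theorem~\ref{main-HY} gives $\|f\|_{p'}\le C_p^d\|x\|_p$. Likewise $\|g\|_{q'}\le C_q^d\|y\|_q$. Combining the three estimates,
\[
\|x\star y\|_r\le \bigl(C_{r'}C_pC_q\bigr)^d\|x\|_p\|y\|_q.
\]

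Finally we identify the constant, using the convention that $C_s=\sqrt{s^{1/s}(s')^{-1/s'}}$ for all $s\in[1,\infty]$. Then $C_{r'}=\sqrt{(r')^{1/r'}r^{-1/r}}=1/C_r$, so the constant equals $(C_pC_q/C_r)^d$, as claimed.

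The only delicate point is bookkeeping rather than analysis. One must check that the Fourier transform $\mathcal F_\theta$ is exactly the inverse of $U_\theta$ on Schwartz elements, so that no reflection or $2\pi$ normalization spoils the identification. One must also handle the endpoint cases $p=1$ or $q=1$, where $p'=\infty$ and $C_1=1$, which are consistent with the conventions.
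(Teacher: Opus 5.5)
Your proof is correct and is essentially the paper's argument. You use the sharp Hausdorff--Young bound $\|U_{\theta_1+\theta_2}(fg)\|_r\le C_{r'}^d\|fg\|_{r'}$, then H\"older with $1/r'=1/p'+1/q'$, then the dual inequality $\|\mathcal F_{\theta}(x)\|_{p'}\le C_p^d\|x\|_p$ together with $\mathcal F_{\theta}(U_{\theta}(f))=f$ and $C_{r'}=C_r^{-1}$; your density reduction and explicit extension of $C_s$ to $s\ge 2$ are slightly more careful than the paper's write-up (which additionally checks optimality with Gaussians via \eqref{asym}, not required by the statement).
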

\begin{proof}
First write
$$ x=U_{\theta_1}(f),\quad y=U_{\theta_2}(g)\quad x\star y=U_{\theta_1+\theta_2}(fg).$$
By the sharp Hausdorff-Young inequality Theorem~\ref{main-HY}, we get
$$ \|x\star y\|_r\leq C_{r^\prime}^d\|fg\|_{r^\prime}.$$
Since
$$ C_{r^\prime}=C_{r}^{-1},\quad \frac{1}{r^\prime}=\frac{1}{p^\prime}+\frac{1}{q^\prime},$$
by H\"older's inequality we obtain
$$ \|x\star y\|_r\leq \frac{1}{C_{r}^d}\|f\|_{p^\prime}\|g\|_{q^\prime}.$$
Applying the sharp Hausdorff-Young inequality again we get
$$\|x\star y\|_r\leq \left(\frac{C_pC_q}{C_r}\right)^d \|x\|_p\|y\|_q.$$

To prove sharpness, put
$$ x_t=U_{\theta_1}(e^{-\frac{t}{p^\prime}|\cdot|^2}),\quad y_t=U_{\theta_2}(e^{-\frac{t}{q^\prime}|\cdot|^2}),\quad x_t\star y_t=U_{\theta_1+\theta_2}(e^{-\frac{t}{r^\prime}|\cdot|^2}).$$
Recall the asymptotic formula in \eqref{asym}, one verifies
$$ \lim_{t\to \infty}\frac{\|x_t\star y_t\|_r}{\|x_t\|_p\|y_t\|_q}=\left(\frac{C_pC_q}{C_r}\right)^d.$$
This completes the proof.
\end{proof}

\subsection{Sharp Young's inequality for beam-splitter}\label{bs}
Up to a dilation argument, the preceding sharp Young's inequality also partially solves the conjecture proposed in \cite[Conjecture V.16]{depalmasurvey} for beam-splitter.

This definition of quantum Euclidean space is essentially the same as the Gaussian quantum system. Consider $n$ pairs of operators $(P_j,Q_j)$ which satisfy
$$ [P_j,Q_j]=-iI,\qquad [P_j,P_k]=[P_j,Q_k]=[Q_j,Q_k]=0,\quad j\neq k.$$
The Weyl displacement operator is defined by
$$ D(\xi)=e^{i(\xi_1Q_1+\xi_2P_1+....+\xi_{2n-1}Q_n+\xi_{2n}P_n)},\quad \xi=(\xi_1,\cdots,\xi_{2n}) \in \R^{2n}.$$
Hence the $n$-mode Gaussian quantum system is exactly the non-degenerate quantum Euclidean space $\R_\theta^d$ with $d=2n$. 

Under this setting, a quantum state $\rho$ takes the form
$$ \rho=\frac{1}{(2\pi)^n}\int_{\R^{2n}} \chi_\rho(\xi)D(-\xi)\,d\xi$$
and $\chi_\rho(\xi)$ is called the characteristic function of $\rho$. 
It is well-known that, denoting the beam-splitter channel by $\mathcal{B}_\eta$ where $\eta\in [0,1]$ is the transmissivity coefficient, the characteristic function of the output state $\mathcal{B}_\eta(\rho,\sigma)$ is determined by
\[
\begin{aligned}
    \chi_{\mathcal{B}_\eta(\rho,\sigma)}(\xi)=
    \chi_\rho(\sqrt{\eta}\,\xi)\,
    \chi_\sigma(\sqrt{1-\eta}\,\xi),\quad \xi\in \R^{2n}.
\end{aligned}
\]

K\"onig and Smith identified this output of a beam splitter as the quantum analogue of the weighted sum of two independent random variables \cite{Konig2014}. This operation, now commonly called quantum convolution, was studied systematically in connection with the quantum central limit theorem by Becker, Datta, Lami, and Rouzé \cite{Becker2021}. Also see the recent work \cite{beigi2026}.

In the framework of our work, let $\Ut(f),\Ut(g)\in L_1(\R_\theta^{2n})$ be two positive elements of $\operatorname{Tr}$-norm $1$ (equivalently, $\|\cdot\|_1=(2\pi)^{-n}$). Then
$$ \mathcal{B}_\eta(\Ut(f),\Ut(g))=(2\pi)^{n}\Ut(f(\sqrt{\eta}\,\cdot\,)g(\sqrt{1-\eta}\,\cdot\,))$$
Here the factor $(2\pi)^{n}$ makes the beam-splitter a $\Tr$-preserving map. With the preceding introduction of dilation, we have
$$ \mathcal{B}_\eta(\Ut(f),\Ut(g))=(2\pi)^{n}\left(\frac{1}{\eta(1-\eta)}\right)^{n} \Psi_{\sqrt{\eta}}\Ut(f)\star \Psi_{\sqrt{1-\eta}}\Ut(g).$$
By the noncommutative sharp Young's inequality, we immediately obtain
\begin{corollary}
Let $\rho,\sigma$ be two states in the $n$-mode Gaussian quantum system. Denote the beam-splitter channel by $\mathcal{B}_\eta$ where $\eta\in (0,1)$. For 
$$ \frac{1}{p}+\frac{1}{q}=\frac{1}{r}+1,\qquad 1\leq p,q\leq 2,\quad r\geq 2,$$ 
we have
$$ \|\mathcal{B}_\eta(\rho,\sigma)\|_{\mathcal{S}^r}\leq \left(\frac{1}{\eta^{\frac{1}{p^\prime}}(1-\eta)^{\frac{1}{q^\prime}}}\right)^{n}\left(\frac{C_pC_q}{C_r}\right)^{2n}\|\rho\|_{\mathcal{S}^p}\|\sigma\|_{\mathcal{S}^q}.$$
Here the constant is sharp.
\end{corollary}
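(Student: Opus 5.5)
The plan is to reduce the corollary to Theorem~\ref{sharpyoung}, using the dilation identity \eqref{dia-id} and the relation between $\tau_\theta$ and $\Tr$. Fix the non-degenerate $\theta$ of the $n$-mode system, so $d=2n$. Then $\Psi_{\sqrt\eta}$ maps $L_\infty(\R^{2n}_\theta)$ onto $L_\infty(\R^{2n}_{\eta\theta})$ and $\Psi_{\sqrt{1-\eta}}$ maps onto $L_\infty(\R^{2n}_{(1-\eta)\theta})$. Since $\eta\theta+(1-\eta)\theta=\theta$, the convolution $\Psi_{\sqrt\eta}x\star\Psi_{\sqrt{1-\eta}}y$ lands back in $\R^{2n}_\theta$. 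Write $\rho=\Ut(f)$, $\sigma=\Ut(g)$ and use the displayed representation
\[
\mathcal B_\eta(\rho,\sigma)=(2\pi)^n(\eta(1-\eta))^{-n}\,\Psi_{\sqrt\eta}\rho\star\Psi_{\sqrt{1-\eta}}\sigma .
\]
Apply Theorem~\ref{sharpyoung} with $\theta_1=\eta\theta$ and $\theta_2=(1-\eta)\theta$. Then use \eqref{dia-id}, i.e.\ $\|\Psi_\lambda x\|_p=\lambda^{2n/p}\|x\|_p$. This gives
\[
\|\mathcal B_\eta(\rho,\sigma)\|_{L_r}\le (2\pi)^n\,\eta^{-n/p'}(1-\eta)^{-n/q'}\Big(\frac{C_pC_q}{C_r}\Big)^{2n}\|\rho\|_{L_p}\|\sigma\|_{L_q}.
\]

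Next I convert $\tau_\theta$-norms into Schatten norms. By \eqref{eq:traceTr}, $\tau_\theta=c\,\Tr$ for a fixed constant $c$, which equals $(2\pi)^{-n}$ in the normalization used in Subsection~\ref{bs}. Hence $\|z\|_{L_s}=c^{1/s}\|z\|_{\mathcal S^s}$. Substituting produces an overall power $(2\pi)^n c^{1/p+1/q-1/r}$. Because $1/p+1/q-1/r=1$ and $c=(2\pi)^{-n}$, this power is exactly $1$. So the stated inequality follows with the stated constant. I will double check that the dilation identity \eqref{dia-id} is used with the traces normalized as in \eqref{eq:traceTr} on each of the algebras $\R_{\eta\theta}$, $\R_{(1-\eta)\theta}$ and $\R_\theta$. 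This bookkeeping, namely where the $\sqrt{\det}$ factors enter, is the step I expect to be most error-prone.

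For sharpness, take the Gaussian extremizing sequence $x_t=U_{\eta\theta}(e^{-\frac t{p'}|\cdot|^2})$, $y_t=U_{(1-\eta)\theta}(e^{-\frac t{q'}|\cdot|^2})$ from the proof of Theorem~\ref{sharpyoung}. Pull them back by setting $\rho_t=\Psi_{\sqrt\eta}^{-1}x_t$ and $\sigma_t=\Psi_{\sqrt{1-\eta}}^{-1}y_t$. Since $\Psi_\lambda$ is a $*$-isomorphism sending Gaussians to Gaussians, $\rho_t$ and $\sigma_t$ are multiples of $U_\theta$ of Gaussians. For $t$ large these are positive operators: they are thermal-type states, diagonal in the Laguerre basis of Appendix~\ref{app} with positive geometric coefficients, exactly as in the proof of Proposition~\ref{lem:gaussian}. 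Both sides of the inequality are homogeneous of degree one in $\rho$ and in $\sigma$. So normalizing to trace one does not change the ratio.

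Each step in the chain of inequalities above is an equality up to the constant supplied by Theorem~\ref{sharpyoung}. Therefore the ratio for $(\rho_t,\sigma_t)$ tends to the stated constant, which proves optimality. The only additional verification is the positivity of these Gaussian elements. For that I would rely on the explicit expansion $e^{-A|\xi|^2}\propto\sum_n r^n f_{nn}$ with $0<r<1$, valid for $A>\lambda/4$.
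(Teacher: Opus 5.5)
Your proposal is correct and follows the paper's proof step for step: the dilated-convolution representation of $\mathcal B_\eta$, Theorem~\ref{sharpyoung} with $\theta_1=\eta\theta$ and $\theta_2=(1-\eta)\theta$, the dilation identity \eqref{dia-id}, the conversion $\tau_\theta=(2\pi)^{-n}\Tr$, and sharpness from the pulled-back Gaussians; your positivity and homogeneity remarks just make explicit what the paper leaves implicit about attaining the constant with states. One caveat, which the paper shares: at the endpoints $p=1$ or $q=1$ the Gaussian $e^{-\frac{t}{p'}\abs{\cdot}^2}$ (resp.\ $e^{-\frac{t}{q'}\abs{\cdot}^2}$) degenerates to the constant $1$, so sharpness there needs another sequence, e.g.\ for $p=1$ (hence $q=r=2$) fix any state $\rho$ and let $\sigma$ be thermal states with $\chi_\sigma(\xi)=e^{-A\abs{\xi}^2}$ and $A\to\infty$.
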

\begin{proof}
We directly compute
$$\|\mathcal{B}_\eta(\rho,\sigma)\|_{\mathcal{S}^r}=(2\pi)^{n/r}\|\mathcal{B}_\eta(\rho,\sigma)\|_{r}.$$
By Theorem \ref{sharpyoung}, we obtain
\begin{align}\label{bs-young}
\|\mathcal{B}_\eta(\rho,\sigma)\|_{\mathcal{S}^r}\leq \left((2\pi)^{1+1/r}\frac{1}{\eta(1-\eta)}\right)^{n}\left(\frac{C_pC_q}{C_r}\right)^{2n} \|\Psi_{\sqrt{\eta}}\Ut(f)\|_p\|\Psi_{\sqrt{1-\eta}}\Ut(g)\|_q.
\end{align} 
Recall that,
$$\|\Psi_{\sqrt{\eta}} \Ut(f)\|_p=\eta^{\frac{n}{p}}\|\Ut(f)\|_p,\quad \|\Psi_{\sqrt{1-\eta}} \Ut(g)\|_q=(1-\eta)^{\frac{n}{q}}\|\Ut(g)\|_q $$
and 
$$(2\pi)^{n/p}\|\Ut(f)\|_p=\|\Ut(f)\|_{\mathcal{S}^p},\quad (2\pi)^{n/q}\|\Ut(g)\|_q=\|\Ut(g)\|_{\mathcal{S}^q}. $$
Replacing these identities in~\eqref{bs-young} gives the desired result. The sharp constant is still saturated by the Gaussian inputs in the proof of Theorem \ref{sharpyoung}.
\end{proof}
\appendix

\section{Orthogonal basis of the quantum Euclidean space}\label{app}
In this section, we fix
\[
 \theta=\begin{pmatrix}0&-1\\1&0\end{pmatrix}.
\]
The computation follows \cite{Gracia1988} and \cite{Gracia2004}. One can also refer to the oscillator realization described in \cite[Chapter 1]{Folland1989}. We include this part in the language of quantum Euclidean space for completeness. We first recall that, for $f,g\in \mathcal{S}(\R^2)$,
\[
 \Ut(f)\Ut(g)=\Ut(f*_\theta g),
 \qquad
 U(f)^*=U(f^{*}),
\]
where
\[
 (f*_\theta g)(x)
 =
 \int_{\mathbb R^2}
 f(x-y)g(y)e^{\frac{i}{2}\langle x,\theta y \rangle}\,dy,
 \qquad
 f^*(x)=\overline{f(-x)}.
\]
In particular,
\(\mathcal S(\mathbb R^2)*_\theta
  \mathcal S(\mathbb R^2)\subseteq\mathcal S(\mathbb R^2)\).

Now we set
\[
 f_{00}(x)=\frac{1}{2\pi}e^{-|x|^2/4}.
\]
Then one can verify
$$ f_{00}*_\theta f_{00}=f_{00}.$$

Let \(\delta\) denote the Dirac distribution at the origin.
Also let \(\partial_j\delta\) denote its first derivative in the
\(j\)-th coordinate---for a test function $\varphi$,
\[
 \langle\partial_j\delta,\varphi\rangle
 =
 -\partial_j\varphi(0),
 \qquad j=1,2.
\]
Define \(
 \D_j=\Ut(\partial_j\delta)
\) on the Schwartz functions \(\mathcal S(\mathbb R^2)\).
More explicitly,
\[
 \D_j\psi
 =
 -\left.
   \frac{d}{dt}\Ut(te_j)\psi
  \right|_{t=0},
\]
where \(e_1,e_2\) are the standard coordinate vectors.
Thus
\begin{equation}\label{eq:mb-generators}
 \D_1=\partial_1+\frac{i}{2}x_2,
 \qquad
 \D_2=\partial_2-\frac{i}{2}x_1.
\end{equation}
These operators are skew-symmetric on the Schwartz functions,
and their closures are skew-adjoint.
In particular
\[
 \D_j^*=-\D_j,
 \qquad
 [\D_1,\D_2]=-iI.
\]
Equivalently, we can define
\begin{align*}
((\partial_j\delta)*_\theta f)(x)
 &=
 -\left.
  \frac{\partial}{\partial y_j}
  \left(
   e^{\frac{i}{2}\langle y,\theta x\rangle}f(x-y)
  \right)
 \right|_{y=0}=
 \partial_jf(x)-\frac{i}{2}(\theta x)_j f(x),
\\
(f*_\theta(\partial_j\delta))(x)
 &=
 -\left.
  \frac{\partial}{\partial y_j}
  \left(
   f(x-y)e^{\frac{i}{2}\langle x,\theta y\rangle}
  \right)
 \right|_{y=0}=
 \partial_jf(x)+\frac{i}{2}(\theta x)_j f(x).
\end{align*}
Therefore
\begin{equation}\label{eq:mb-left-right}
\begin{aligned}
 \D_1\Ut(f)
 &=\Ut\left(\partial_1f+\frac{i}{2}x_2f\right),
&
 \Ut(f)\D_1
 &=\Ut\left(\partial_1f-\frac{i}{2}x_2f\right),
\\
 \D_2\Ut(f)
 &=\Ut\left(\partial_2f-\frac{i}{2}x_1f\right),
&
 \Ut(f)\D_2
 &=\Ut\left(\partial_2f+\frac{i}{2}x_1f\right).
\end{aligned}
\end{equation}
All these products involving \(\D_j\) are initially defined on the
Schwartz functions and then extended to other cases.

Next, define the annihilation and creation operators by
\[
 a=\frac{\D_1+i\D_2}{\sqrt2},
 \qquad
 a^*=\frac{-\D_1+i\D_2}{\sqrt2}.
\]
The adjoint relation follows from \(\D_j^*=-\D_j\), and
\[
 [a,a^*]=i[\D_1,\D_2]=\mathbf{1}_{\mathcal S(\mathbb R^2)}.
\]

For convenience, put
\[
 z=\frac{x_1+ix_2}{\sqrt2},
 \qquad
 \partial_z=\frac{\partial_1-i\partial_2}{\sqrt2},
 \qquad
 \partial_{\bar z}=\frac{\partial_1+i\partial_2}{\sqrt2},
\]
where
\[
 |z|^2=\frac{|x|^2}{2},
 \qquad
 \partial_z z=\partial_{\bar z}\bar z=1,
 \qquad
 \partial_z\bar z=\partial_{\bar z}z=0.
\]
Formula \eqref{eq:mb-left-right} then becomes
\begin{equation}\label{eq:mb-ladder-actions}
\begin{aligned}
 a\Ut(f)
 &=\Ut\left(\partial_{\bar z}f+\frac z2f\right),
&
 a^*\Ut(f)
 &=\Ut\left(-\partial_zf+\frac{\bar z}{2}f\right),
\\
 \Ut(f)a
 &=\Ut\left(\partial_{\bar z}f-\frac z2f\right),
&
 \Ut(f)a^*
 &=\Ut\left(-\partial_zf-\frac{\bar z}{2}f\right).
\end{aligned}
\end{equation}
One can compute that
\[
 \partial_{\bar z}f_{00}=-\frac z2f_{00},
 \qquad
 \partial_zf_{00}=-\frac{\bar z}{2}f_{00}.
\]
It then follows
\begin{equation}\label{eq:mb-vacuum}
 a\Ut(f_{00})=0,
 \qquad
 \Ut(f_{00})a^*=0.
\end{equation}
More generally, for every polynomial \(P(z,\bar z)\),
\begin{equation}\label{eq:mb-polynomial-actions}
\begin{aligned}
 a\Ut(Pf_{00})
 &=\Ut\bigl((\partial_{\bar z}P)f_{00}\bigr),
\\
 a^*\Ut(Pf_{00})
 &=\Ut\bigl((\bar zP-\partial_zP)f_{00}\bigr),
\\
 \Ut(Pf_{00})a
 &=\Ut\bigl((\partial_{\bar z}P-zP)f_{00}\bigr),
\\
 \Ut(Pf_{00})a^*
 &=\Ut\bigl(-(\partial_zP)f_{00}\bigr).
\end{aligned}
\end{equation}

For \(m,n\in\mathbb N_0\), define
\begin{equation}\label{eq:mb-matrix-definition}
 E_{mn}
 =
 \frac{1}{\sqrt{m!\,n!}}(a^*)^m\circ\Ut(f_{00})\circ a^n.
\end{equation}
By \eqref{eq:mb-polynomial-actions}, every such product $E_{mn}$ equals \(\Ut(f_{mn})\)
for $f_{mn}$ a polynomial times \(f_{00}\), while $f_{mn}$ is still a Schwartz function.

\begin{proposition}\label{prop:mb-matrix-units}
For \(m,n,j,k\in\mathbb N_0\),
\[
 E_{mn}E_{jk}=\delta_{nj}E_{mk},
 \qquad
 E_{mn}^*=E_{nm}.
\]
\end{proposition}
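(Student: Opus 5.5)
The adjoint relation can be settled first, since it is formal. Using $(AB)^*=B^*A^*$, $(a^*)^*=a$, and the self-adjointness of $\Ut(f_{00})$ (because $f_{00}^*=f_{00}$, as $f_{00}$ is real and even), we get
\[
E_{mn}^*=\frac{1}{\sqrt{m!\,n!}}\,(a^*)^n\,\Ut(f_{00})\,a^m=E_{nm}.
\]
All the products here are taken on $\mathcal S(\R^2)$. They are legitimate because, by \eqref{eq:mb-polynomial-actions}, each partial product is again $\Ut$ of a polynomial times $f_{00}$.

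For the multiplication rule, write
\[
E_{mn}E_{jk}=\frac{1}{\sqrt{m!n!j!k!}}\,(a^*)^m\,\Ut(f_{00})\,a^n(a^*)^j\,\Ut(f_{00})\,a^k .
\]
The core of the argument is the identity
\begin{equation*}
\Ut(f_{00})\,a^n(a^*)^j\,\Ut(f_{00})=\delta_{nj}\,n!\,\Ut(f_{00}),
\end{equation*}
which I will prove in three steps. First, the canonical commutation relation $[a,a^*]=\mathbf 1$ gives, by induction (Wick normal ordering),
\[
a^n(a^*)^j=\sum_{l\le\min(n,j)}\binom{n}{l}\binom{j}{l}\,l!\,(a^*)^{j-l}a^{n-l}.
\]
Second, I sandwich this expansion between two copies of $\Ut(f_{00})$ and use \eqref{eq:mb-vacuum}, namely $a\Ut(f_{00})=0$ and $\Ut(f_{00})a^*=0$. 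Any term with $n-l>0$ is killed on the right, and any term with $j-l>0$ is killed on the left. The only survivor is the term $l=n=j$, with coefficient $n!$, and it equals $n!\,\Ut(f_{00})^2$. Third, $\Ut(f_{00})^2=\Ut(f_{00}*_\theta f_{00})=\Ut(f_{00})$ by the stated idempotence $f_{00}*_\theta f_{00}=f_{00}$. Substituting the identity back yields $E_{mn}E_{jk}=\delta_{nj}\frac{n!}{\sqrt{m!n!n!k!}}(a^*)^m\Ut(f_{00})a^k=\delta_{nj}E_{mk}$.

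The main obstacle is rigor in the operator manipulations, because $a$ and $a^*$ are unbounded. My plan is to verify every identity at the level of symbols. Each of $a\cdot$, $a^*\cdot$, $\cdot a$, $\cdot a^*$ acts on $\Ut(Pf_{00})$ through the explicit formulas \eqref{eq:mb-polynomial-actions}, so all products are $\Ut$ of Schwartz functions and associativity holds. In the same spirit, the commutation relation $[a,a^*]=\mathbf 1$ is used only as it acts on such elements, which follows directly from \eqref{eq:mb-ladder-actions}. The idempotence of $f_{00}$ is a Gaussian integral, which I take as verified.
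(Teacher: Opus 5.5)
Your proposal is correct and follows the paper's proof essentially step for step. You use the same normal-ordering expansion of $a^n(a^*)^j$ (your coefficient $\binom{n}{l}\binom{j}{l}l!$ equals the paper's $\binom{n}{\ell}\frac{j!}{(j-\ell)!}$), the same sandwiching with the vacuum relations \eqref{eq:mb-vacuum}, and the same direct reading of the adjoint identity; the one addition is that you spell out the idempotence $\Ut(f_{00})^2=\Ut(f_{00})$ from $f_{00}*_\theta f_{00}=f_{00}$, which the paper uses without comment in the step $\Ut(f_{00})a^n(a^*)^j\Ut(f_{00})=\delta_{nj}n!\,\Ut(f_{00})$.
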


\begin{proof}
Repeated use of \([a,a^*]=I\) gives
\[
 a^n(a^*)^j
 =
 \sum_{\ell=0}^{\min(n,j)}
 \binom n\ell\frac{j!}{(j-\ell)!}
 (a^*)^{j-\ell}a^{n-\ell}.
\]
Multiplying by \(\Ut(f_{00})\) on both sides and using
\eqref{eq:mb-vacuum}, every term vanishes unless
\(n=j=\ell\). Thus
\[
 \Ut(f_{00})a^n(a^*)^j\Ut(f_{00})=\delta_{nj}n!\Ut(f_{00}).
\]
Consequently,
\begin{align*}
 E_{mn}E_{jk}=
 \frac{(a^*)^m
      \Ut(f_{00})a^n(a^*)^j\Ut(f_{00})a^k}
      {\sqrt{m!\,n!\,j!\,k!}}=
 \delta_{nj}
 \frac{(a^*)^m\Ut(f_{00})a^k}{\sqrt{m!\,k!}}
 =
 \delta_{nj}E_{mk}.
\end{align*}
The adjoint identity follows directly.
\end{proof}

We next determine the functions \(f_{mn}\) explicitly.
By \eqref{eq:mb-polynomial-actions},
\[
 (a^*)^m\Ut(f_{00})=\Ut(\bar z^m f_{00}),
\]
and hence
\begin{equation}\label{eq:mb-symbol-derivative}
 f_{mn}
 =
 \frac{1}{\sqrt{m!\,n!}}
 \bigl((\partial_{\bar z}-z)^n\bar z^m\bigr)f_{00}.
\end{equation}
Since multiplication by \(z\) commutes with
\(\partial_{\bar z}\), the binomial formula yields
\begin{equation}\label{eq:mb-symbol-polynomial}
 f_{mn}
 =
 \frac{f_{00}}{\sqrt{m!\,n!}}
 \sum_{\ell=0}^{\min(m,n)}
 \binom n\ell
 \frac{m!}{(m-\ell)!}
 (-z)^{n-\ell}\bar z^{m-\ell}.
\end{equation}

For \(N,\alpha\in\mathbb N_0\), denote the generalized Laguerre
polynomial by
\[
 L_N^{(\alpha)}(x)
 =
 \sum_{k=0}^N
 \binom{N+\alpha}{N-k}\frac{(-x)^k}{k!}.
\]
If \(m\ge n\), taking \(k=n-\ell\) in
\eqref{eq:mb-symbol-polynomial} gives
\begin{align*}
 &\sum_{\ell=0}^{n}
 \binom n\ell
 \frac{m!}{(m-\ell)!}
 (-z)^{n-\ell}\bar z^{m-\ell}\\
 &\qquad=
 n!\,\bar z^{m-n}
 \sum_{k=0}^n
 \binom m{n-k}\frac{(-|z|^2)^k}{k!}\\
 &\qquad=
 n!\,\bar z^{m-n}L_n^{(m-n)}(|z|^2).
\end{align*}
If \(m<n\), taking \(k=m-\ell\) gives
\begin{align*}
 &\sum_{\ell=0}^{m}
 \binom n\ell
 \frac{m!}{(m-\ell)!}
 (-z)^{n-\ell}\bar z^{m-\ell}\\
 &\qquad=
 m!(-z)^{n-m}
 \sum_{k=0}^m
 \binom n{m-k}\frac{(-|z|^2)^k}{k!}\\
 &\qquad=
 m!(-z)^{n-m}L_m^{(n-m)}(|z|^2).
\end{align*}
We have therefore proved the explicit formula
\begin{equation}\label{eq:mb-laguerre}
 f_{mn}(x)
 =
 \begin{cases}
 \displaystyle
  \frac{1}{2\pi}\sqrt{\frac{n!}{m!}}\,
 \left(\frac{x_1-ix_2}{\sqrt2}\right)^{m-n}
 L_n^{(m-n)}\left(\frac{|x|^2}{2}\right)e^{-\frac{|x|^2}{4}},
 &m\ge n,
 \\[9pt]
 \displaystyle
 \frac{1}{2\pi}\sqrt{\frac{m!}{n!}}\,
 \left(-\frac{x_1+ix_2}{\sqrt2}\right)^{n-m}
 L_m^{(n-m)}\left(\frac{|x|^2}{2}\right)e^{-\frac{|x|^2}{4}},
 &m<n.
 \end{cases}
\end{equation}
The diagonal case is
\[
 f_{nn}(x)
 =
 \frac{1}{2\pi}
 L_n\left(\frac{|x|^2}{2}\right)e^{-|x|^2/4}.
\]
Moreover, $\Ut(f_{nn})$ are minimal projections and satisfy
$$ \sum_{n\geq 0}\Ut(f_{nn})=\mathbf{1}.$$

\noindent{\bf AI Statement.} The prototype of the monotonic flow was proposed by the authors, further refined during the interactions with GPT-5.6-sol and finally reformulated by the authors. The authors also acknowledge the use of GPT-5.6-sol for \TeX{} editing and language polishing. The authors independently checked all the results and take full responsibility for the mathematical content of this paper.

\noindent{\bf Acknowledgement.} The authors are supported by the National Natural Science Foundation of China (Nos. 12371138 and W2441002). They would also like to thank Prof. Javier Parcet for his valuable suggestions.

\bibliographystyle{amsplain}
\bibliography{references}

@article{MSX2020,
  author  = {McDonald, Edward and Sukochev, Fedor and Xiong, Xiao},
  title   = {Quantum Differentiability on Noncommutative {Euclidean} Spaces},
  journal = {Comm. Math. Phys.},
  volume  = {379},
  number  = {2},
  year    = {2020},
  pages   = {491--542},
  doi     = {10.1007/s00220-019-03605-2}
}

@article{PotapovSukochev2014,
  author  = {Potapov, Denis and Sukochev, Fedor},
  title   = {{Fr{\'e}chet} differentiability of {$\mathcal{S}^{p}$} norms},
  journal = {Adv. Math.},
  volume  = {262},
  year    = {2014},
  pages   = {436--475},
  doi     = {10.1016/j.aim.2014.05.011}
}

@book{Folland1989,
  author    = {Folland, Gerald B.},
  title     = {Harmonic Analysis in Phase Space},
  series    = {Annals of Mathematics Studies},
  volume    = {122},
  publisher = {Princeton University Press},
  address   = {Princeton, NJ},
  year      = {1989},
  isbn      = {0-691-08528-5}
}

@article{KleinRusso1978,
  author  = {Klein, Abel and Russo, Bernard},
  title   = {Sharp inequalities for {Weyl} operators and {Heisenberg} groups},
  journal = {Math. Ann.},
  volume  = {235},
  year    = {1978},
  pages   = {175--194},
  doi     = {10.1007/BF01405012}
}

@article{CowlingEtAl2019,
  author  = {Cowling, Michael G. and Martini, Alessio and
             M{\"u}ller, Detlef and Parcet, Javier},
  title   = {The {Hausdorff--Young} inequality on {Lie} groups},
  journal = {Math. Ann.},
  volume  = {375},
  number  = {1--2},
  year    = {2019},
  pages   = {93--131},
  doi     = {10.1007/s00208-018-01799-9}
}

@article{Beckner1975,
 author = {William Beckner},
 journal = {Annals of Mathematics},
 number = {1},
 pages = {159--182},
 publisher = {Princeton University},
 title = {Inequalities in Fourier Analysis},
 volume = {102},
 year = {1975}
}

@article{Bennet2008,
 author = {Bennett, Jonathan and Bez, Neal and Carbery, Anthony},
 title = {Heat-flow monotonicity related to the {Hausdorff}-{Young} inequality},
 fjournal = {Bulletin of the London Mathematical Society},
 journal = {Bull. Lond. Math. Soc.},
 volume = {41},
 number = {6},
 pages = {971--979},
 year = {2009},
}

@book{Cartan1967,
  author    = {Cartan, Henri},
  title     = {Calcul diff\'erentiel},
  publisher = {Hermann},
  address   = {Paris},
  year      = {1967},
  series    = {Collection M\'ethodes},
  note      = {Cours de math\'ematiques, II, fascicule 1},
  language  = {French}
}

@article{Lieb1990,
    author = {Lieb, Elliott H.},
    title = {Integral bounds for radar ambiguity functions and Wigner distributions},
    journal = {Journal of Mathematical Physics},
    volume = {31},
    number = {3},
    pages = {594-599},
    year = {1990},
    month = {03},
}

@article{Peller2006,
  author  = {Peller, Vladimir V.},
  title   = {Multiple operator integrals and higher operator derivatives},
  journal = {J. Funct. Anal.},
  volume  = {233},
  number  = {2},
  year    = {2006},
  pages   = {515--544},
}

@article{Konig2014,
  author  = {K{\"o}nig, Robert and Smith, Graeme},
  title   = {The Entropy Power Inequality for Quantum Systems},
  journal = {IEEE Transactions on Information Theory},
  volume  = {60},
  number  = {3},
  pages   = {1536--1548},
  year    = {2014}
}

@article{Becker2021,
  author  = {Becker, Simon and Datta, Nilanjana and Lami, Ludovico
             and Rouz{\'e}, Cambyse},
  title   = {Convergence Rates for the Quantum Central Limit Theorem},
  journal = {Communications in Mathematical Physics},
  volume  = {383},
  number  = {1},
  pages   = {223--279},
  year    = {2021},
}

@article{beigi2026,
  title={Monotonicity of the von neumann entropy under quantum convolution},
  author={Beigi, Salman and Mehrabi, Hami},
  journal={Communications in Mathematical Physics},
  volume={407},
  number={9},
  pages={199},
  year={2026},
  publisher={Springer}
}

@article{EYM1964,
  title={L'alg{\`e}bre de Fourier d'un groupe localement compact},
  author={Eymard, Pierre},
  journal={Bulletin de la Soci{\'e}t{\'e} math{\'e}matique de France},
  volume={92},
  pages={181--236},
  year={1964}
}

@article{Gracia1988,
    author = {Gracia-Bondía, José M. and Várilly, Joseph C.},
    title = {Algebras of distributions suitable for phase-space quantum mechanics. I},
    journal = {Journal of Mathematical Physics},
    volume = {29},
    number = {4},
    pages = {869-879},
    year = {1988},
    month = {04},
}

@book{qmath,
 author = {Takhtajan, Leon A.},
 title = {Quantum mechanics for mathematicians},
 fseries = {Graduate Studies in Mathematics},
 series = {Grad. Stud. Math.},
 volume = {95},
 year = {2008},
 publisher = {Providence, RI: American Mathematical Society (AMS)},
}

@article{ParcetMAMS,
 author = {Gonz{\'a}lez-P{\'e}rez, Adr{\'{\i}}an Manuel and Junge, Marius and Parcet, Javier},
 title = {Singular integrals in quantum {Euclidean} spaces},
 journal = {Memoirs of the American Mathematical Society},
 volume = {1334},
 year = {2021},
}

@book{qtmath,
 author = {Hall, Brian C.},
 title = {Quantum theory for mathematicians},
 fseries = {Graduate Texts in Mathematics},
 series = {Grad. Texts Math.},
 issn = {0072-5285},
 volume = {267},
 isbn = {978-1-4614-7115-8; 978-1-4614-7116-5},
 year = {2013},
 publisher = {New York, NY: Springer},
}

@article{Gracia2004,
  title={Moyal planes are spectral triples},
  author={Gayral, Victor and Gracia-Bondia, Jose M. and Iochum, Bruno and Sch{\"u}cker, Thomas and V{\'a}rilly, Joseph C.},
  journal={Communications in mathematical physics},
  volume={246},
  number={3},
  pages={569--623},
  year={2004},
}

@article{depalmasurvey,
    author = {De Palma, Giacomo and Trevisan, Dario and Giovannetti, Vittorio and Ambrosio, Luigi},
    title = {Gaussian optimizers for entropic inequalities in quantum information},
    journal = {Journal of Mathematical Physics},
    volume = {59},
    number = {8},
    pages = {081101},
    year = {2018},
    month = {08},
}

@misc{LX2019,
  title  = {Twisted {Fourier(-Stieltjes)} spaces and amenability},
  author = {Lee, Hun Hee and Xiong, Xiao},
  year   = {2025},
  note   = {arXiv:1910.05888.
            \url{https://arxiv.org/abs/1910.05888}}
}

@article{Fedor2020,
  title={Cwikel estimates revisited},
  author={Levitina, Galina and Sukochev, Fedor and Zanin, Dmitriy},
  journal={Proceedings of the London Mathematical Society},
  volume={120},
  number={2},
  pages={265--304},
  year={2020},
}

@article{beigimeta,
  title={A Meta Logarithmic-Sobolev Inequality for Phase-Covariant Gaussian Channels},
  author={Beigi, Salman and Rahimi-Keshari, Saleh},
  journal={Annales Henri Poincar{\'e}},
  volume={26},
  number={8},
  pages={2737--2778},
  year={2025},
}

@article{DOIVNA,
title = {Differentiation of operator functions in non-commutative Lp-spaces},
journal = {Journal of Functional Analysis},
volume = {212},
number = {1},
pages = {28-75},
year = {2004},
author = {B. {de Pagter} and F.A. Sukochev},
}

@article{Sukochev2019,
  author  = {Potapov, D. and Sukochev, F. and
             Tomskova, A. and Zanin, D.},
  title   = {Fr{\'e}chet differentiability of the norm of
             {$L_p$}-spaces associated with arbitrary
             von Neumann algebras},
  journal = {Trans. Amer. Math. Soc.},
  volume  = {371},
  number  = {11},
  year    = {2019},
  pages   = {7493--7532},
  doi     = {10.1090/tran/7215}
}

@article{Janson,
title = {On Complex Hypercontractivity},
journal = {Journal of Functional Analysis},
volume = {151},
number = {1},
pages = {270-280},
year = {1997},
author = {Svante Janson},
}

@article{Paata,
title = {From discrete flow of Beckner to continuous flow of Janson in complex hypercontractivity},
journal = {Journal of Functional Analysis},
volume = {276},
number = {9},
pages = {2716-2730},
year = {2019},
author = {P. Ivanisvili and A. Volberg},
}

@article{BakryLedoux1996,
  author  = {Bakry, Dominique and Ledoux, Michel},
  title   = {{L{\'e}vy--Gromov}'s isoperimetric inequality
             for an infinite dimensional diffusion generator},
  journal = {Inventiones Mathematicae},
  volume  = {123},
  number  = {2},
  pages   = {259--281},
  year    = {1996},
}

@article{BennettBez2009,
  author  = {Bennett, Jonathan and Bez, Neal},
  title   = {Closure properties of solutions to heat inequalities},
  journal = {Journal of Geometric Analysis},
  volume  = {19},
  number  = {3},
  pages   = {584--600},
  year    = {2009},
}
\end{document}